\definecolor{verde}{rgb}{0.5,.7,.2}
\theoremstyle{plain}
\newtheorem{theorem}{Theorem}[section]
\newtheorem{definition}[theorem]{Definition}
\newtheorem{lemma}[theorem]{Lemma}
\newtheorem{proposition}[theorem]{Proposition}
\newtheorem{corollary}[theorem]{Corollary}
\newtheorem{remark}[theorem]{Remark}
\newtheorem{example}[theorem]{Example}
\newtheorem{question}[theorem]{QUESTION}
\newtheorem{remark-question}[section]{Remark-Question}
\newtheorem{conjecture}[section]{Conjecture}
\newcommand\R{{\mathbb R}}
\newcommand\frn{{\mathfrak n}}
\begin{document}
\title[Laplacian flow of closed $\mathrm{G}_2$-structures inducing nilsolitons]{Laplacian flow of closed $\mathrm{G}_2$-structures\\
inducing nilsolitons}
\date{\today}
\author{Marisa Fern\'andez, Anna Fino and V\'ictor Manero}
\maketitle
\begin{abstract}
We study the existence of left invariant closed $G_2$-structures defining 
a Ricci soliton metric on simply connected nonabelian nilpotent Lie groups. 
 For each one of these $G_2$-structures, we show long time existence and
uniqueness of solution for the Laplacian flow on the noncompact 
manifold.  Moreover, considering the  Laplacian flow  on the associated Lie algebra as a bracket flow on $\R^7$ in a similar way as in \cite{La4} we prove that   the underlying metrics  $g(t)$ of the solution converge smoothly, up to pull-back by time-dependent 
diffeomorphisms,  to a flat metric, uniformly on compact sets in  the nilpotent Lie group, as $t$ goes to infinity.
 \end{abstract}

%%%%%%%%%%%%%%%%%%%%%%%%%%%%%%%%%%%%%%%%%%%%%%%%%%%%%%%%%%%%%%%%%%%%%%%%%%%%%%%%%%%%%%
%INTRODUCTION%%%%%%%%%%%%%%%%%%%%%%%%%%%%%%%%%%%%%%%%%%%%%%%%%%%%%%%%%%%%%%%%%%%%%%%%%%%%%
%%%%%%%%%%%%%%%%%%%%%%%%%%%%%%%%%%%%%%%%%%%%%%%%%%%%%%%%%%%%%%%%%%%%%%%%%%%%%%%%%%%%%%

\begin{section}{Introduction}

A $G_2$-structure on a $7$-dimensional manifold $M$ can be characterized by the existence 
of a globally defined 3-form $\varphi$,
which is called  the  $G_2$ form  or the fundamental  $3$-form
and it can be described locally as
$$
\varphi=e^{127}+e^{347}+e^{567}+e^{135}-e^{146}-e^{236}-e^{245},
$$ 
with respect to some local basis $\{e^1,\dotsc, e^7\}$ of the $1$-forms on $M$.

There are many different $G_2$-structures attending to the behavior of the exterior derivative 
of the $G_2$ form \cite{Br-0, FernandezGray}.
In the following, we will focus our attention on {\em closed $G_2$-structures}
which are characterized by the 
closure of the $G_2$ form.

The existence of a $G_2$ form $\varphi$ on a manifold $M$ induces a Riemannian metric $g_{\varphi}$ on $M$
given by
\begin{equation}\label{metric}
g_{\varphi} (X,Y) vol=\frac 16 \iota_X\varphi \wedge \iota_Y\varphi \wedge \varphi,
\end{equation}
for any vector fields $X, Y$ on $M$, where $vol$ is the volume form on $M$.

By \cite{Br,CI} a closed $G_2$-structure on a compact manifold cannot induce an Einstein metric, 
unless the induced metric has holonomy contained in $G_2$.  It is still an open problem to see if 
the same property holds on noncompact  manifolds. For the homogeneous case, a negative answer 
has been recently given in \cite{FFM}. Indeed, we showed that if a solvable Lie algebra has a 
closed $G_2$-structure then the induced inner product is Einstein if and only if it is flat.

Natural generalizations of Einstein metrics are given by Ricci solitons, which  
have been introduced by Hamilton in \cite{Hamilton}. 
 A natural question is thus to see if a closed $G_2$-structure 
 on a noncompact manifold induces a (non-Einstein) Ricci soliton  
metric.
In this paper we  
give a positive answer to this question, showing that there exist $7$-dimensional  
simply connected nonabelian nilpotent 
Lie groups with a closed $G_2$-structure 
which determines a left invariant Ricci soliton metric. 

All known examples of nontrivial homogeneous Ricci solitons are left invariant metrics 
on simply connected solvable Lie groups, whose Ricci operator satisfies the condition 
$$Ric(g) =  \lambda I + D, $$ for some $\lambda \in \R$ and some derivation $D$ of the 
corresponding Lie algebra. The left invariant metrics satisfying the previous condition 
are called {\em nilsolitons} if the Lie groups are nilpotent \cite{La01}. 
Not all nilpotent Lie groups admit nilsoliton metrics, but if a nilsoliton 
exists, then it is unique up to automorphism and scaling \cite{La01}. The nilsolitons  metrics are strictly related to left invariant Einstein metrics on solvable Lie groups. Indeed, by \cite{La1}, a simply connected nilpotent Lie group $N$ admits a  
nilsoliton metric if and only if its Lie algebra $\mathfrak n$ is an Einstein nilradical, 
which means that $\frak n$ has
an inner product $\langle \cdot, \cdot \rangle$ such that there is a 
metric solvable extension of 
$(\mathfrak{n}, \langle \cdot, \cdot \rangle)$ which is Einstein.
According to \cite{La2,He}, such an Einstein metric has to be of standard type and 
it is unique, up to isometry and scaling.

Seven dimensional 
nilpotent Lie algebras admitting a closed $G_2$-structure have been recently classified in \cite{CF}, showing 
that there are twelve isomorphism classes, including the abelian case which has a trivial
nilsoliton because it is flat. 
A classification of  $7$-dimensional nilpotent Lie algebras admitting a nilsoliton has  been recently  given in \cite{Fe3},
but the explicit expression of the nilsoliton is not written in all the cases that we need.
 
Using the  classification in \cite{CF} and Table 1 in \cite{Fe1}, 
we have  that, up to isomorphism,  there is a unique 
nilpotent Lie algebra with a closed $G_2$ form but not admitting
nilsolitons.  It turns out that  all the other ten nilpotent Lie algebras have a nilsoliton, and we can determine
explicitly the nilsoliton except for  the Lie algebra
$\frak n_{10}$  which  is  4-step nilpotent
(see also \cite{Fe1,Fe2,Fe3}). 
In Proposition \ref{nilsoliton-NO-closed G2}
we prove that the Lie algebra $\frak n_{i}$ $(i=3, 5, 7, 8, 11)$ has a nilsoliton 
but no closed $G_2$-structure
inducing the nilsoliton.
Moreover, as we mentioned before, the existence of a nilsoliton on the Lie algebra $\frak n_{10}$
was shown in \cite[Example 2]{Fe1},
but we cannot explicit  its nilsoliton.
Therefore, it remains open the question of whether the Lie algebra $\frak n_{10}$ admits a
closed $G_2$ form inducing a nilsoliton or not.
This is the reason why the result of 
Theorem \ref{nilsoliton-closed G2} is restricted to
$s$-step nilpotent Lie algebras, with $s=2, 3$. In fact, in Theorem \ref{nilsoliton-closed G2}, 
we show that, up to isomorphism, there are 
exactly four $s$-step nilpotent Lie algebras $(s=2, 3)$ with a closed $G_2$ form defining a nilsoliton. 

The Ricci flow became a very important issue in Riemannian geometry and 
has been deeply studied. The same techniques are also useful in the 
study of the flow involving other geometrical structures, like for example, 
the K\"{a}hler Ricci flow that was studied by Cao in \cite{Cao}.

For any closed $G_2$-structure on a manifold $M$, in 
\cite{Br} Bryant introduced 
a natural flow, the so-called {\em Laplacian flow}, given by
$$
\left \{   \begin{array}{l} \frac{d}{dt} \varphi (t) = \Delta_t  \varphi(t),\\[3pt]
\varphi(0)=\varphi_0,
\end{array}
\right.
$$
where $\varphi (t)$ is a closed $G_2$ form on $M$, and $\Delta_t$ is the Hodge
Laplacian operator of the metric determined by $\varphi(t)$. If the initial $3$-form  
$\varphi_0$ is closed, then a solution $\varphi(t)$ of the flow remains closed, 
and the de Rham cohomology
class $[\varphi(t)]$ is constant in $t$.
The short time existence and uniqueness of solution for the Laplacian flow of any
closed $G_2$-structure, 
on a compact manifold $M$, has been proved by Bryant and Xu in the unpublished paper \cite{Br-Xu}.
Also, long time existence and convergence of the Laplacian flow
starting near a torsion-free $G_2$-structure was proved in  the unpublished paper \cite{Xu-Ye}.

In Section \ref{sectLaplacian}  (Theorem \ref{Lap-flow: N2}, Theorem \ref{Lap-flow: N4}, Theorem \ref{Lap-flow: N6} and Theorem \ref{Lap-flow: N12}) we   show long time existence of the solution for the 
Laplacian flow on the four nilpotent Lie groups admitting an invariant 
closed $G_2$-structure which determines the nilsoliton
(see Theorem \ref{nilsoliton-closed G2}).

To our knowledge, these are the first examples of noncompact manifolds
having a closed $G_2$-structure with long time existence of solution.

Since the  Laplacian flow is invariant by diffeomorphisms and the initial $G_2$-form $\varphi_0$  
is invariant, the solution $\varphi (t)$ of the Laplacian flow has to be also invariant. Therefore, 
we show that the Laplacian flow is equivalent to a system of ordinary differential equations which 
admits a unique solution. We prove that the solution for the four manifolds is defined 
for any $t\in [0, + \infty)$.  Moreover, considering the  Laplacian flow on the associated Lie algebra as a bracket flow on $\R^7$, in a similar way as Lauret 
did in \cite{La4} for the Ricci flow, we show that the underlying metrics $g(t)$ of the solution converge smoothly, up to pull-back by time-dependent diffeomorphisms,  to a flat metric, uniformly on compact sets in the nilpotent Lie group as $t$  goes to infinity.  Indeed, 
by \cite[Proposition 2.1]{La4}
the convergence of  the  metrics   in the   ${\mathcal C}^{\infty}$  uniformly on compact sets in $\R^7$ is equivalent to the convergence  of the nilpotent Lie  brackets $\mu_t$  in the algebraic subset of nilpotent Lie brackets ${\mathcal N}  \subset (\Lambda^2 \R^7) ^* \otimes \R^7$  with the usual vector space topology. 

\end{section}

%%%%%%%%%%%%%%%%%%%%%%%%%%%%%%%%%%%%%%%%%%%%%%%%%%%%%%%%%%%%%%%%%%%%%%%%%%%%%%%%%%%%%%
%2.PRELIMINARIES ON NILSOLITONS%%%%%%%%%%%%%%%%%%%%%%%%%%%%%%%%%%%%%%%%%%%%%%%%%%%%%%%%%%%
%%%%%%%%%%%%%%%%%%%%%%%%%%%%%%%%%%%%%%%%%%%%%%%%%%%%%%%%%%%%%%%%%%%%%%%%%%%%%%%%%%%%%%

\begin{section}{Preliminaries on nilsolitons}

In this section, we recall some definitions and results about 
{\em nontrivial homogeneous Ricci soliton metrics} and, in particular, 
on {\em nilsolitons}. For more details, 
see for instance \cite{Chow}, \cite{La01} and \cite{Jablonski}.

A complete Riemannian metric $g$ on a manifold $M$ is said to be a
{\em Ricci soliton} if its Ricci curvature tensor $Ric(g)$ satisfies the following condition
\begin{equation*}\label{ricci-soliton}
Ric(g) =  \lambda g + {\mathcal{L}}_X g, 
\end{equation*}
for some real constant $\lambda$ and a complete vector field $X$ on $M$, where 
${\mathcal{L}}_X$ denotes  the
Lie derivative with respect to $X$.
If in addition $X$ is the gradient vector field of a smooth function $f\colon M \rightarrow \R$, 
then the Ricci soliton is said to be of {\em gradient type}. Ricci solitons are called {\em expanding}, 
{\em steady} or {\em shrinking} depending on 
whether $\lambda  < 0$, $\lambda = 0$ or $\lambda  > 0$, respectively.

In the next section we will focus our attention on nilsolitons, that is,
a particular type of nontrivial homogeneous Ricci
soliton metrics. 

A Ricci soliton metric $g$ on $M$
is called {\em trivial} if $g$ is an Einstein metric or $g$ is the product 
of a homogeneous Einstein metric with the Euclidean metric; and
$g$ is said to be {\em homogeneous} if its
isometry group acts transitively on $M$, and hence
$g$ has bounded curvature \cite{La3}. 

In order to characterize the nontrivial homogeneous Ricci
soliton metrics, we note that any homogeneous steady or shrinking   Ricci 
soliton metric $g$ of gradient type
is trivial.  Indeed, if $g$ is steady, one can check 
that $g$ is Ricci flat, and so by \cite{AK} $g$ must be flat. 
If $g$ is shrinking, then
by the results in \cite[Theorem 1.2]{Naber} and in \cite{PW},
$(M, g)$ is isometric to a quotient of $P \times \R^k$, 
where $P$ is some homogeneous Einstein manifold with positive scalar curvature.  
Now, we should notice that this last result for shrinking
homogeneous Ricci soliton metrics is also true 
for homogeneous Ricci solitons of gradient type \cite{PW}.
Moreover, if a homogeneous Ricci soliton $g$ on a manifold $M$ is expanding, then by \cite{I}
$M$ must be noncompact; and from \cite{Perelman} all Ricci solitons 
(homogeneous or nonhomogeneous) on a
compact manifold are of gradient type. 
Therefore, as it was noticed by Lauret in  \cite{La3} we have the following
\begin{lemma}$($\cite{La3}$)$ 
Let $g$ be a nontrivial homogeneous Ricci soliton on a 
manifold $M$. Then, $g$ is expanding and it cannot be of gradient type. Moreover, $M$ is noncompact.
\end{lemma}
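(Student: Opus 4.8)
The statement to prove is: if $g$ is a nontrivial homogeneous Ricci soliton on $M$, then $g$ is expanding, it cannot be of gradient type, and $M$ is noncompact. Conveniently, the paragraph immediately preceding the statement already assembles all the ingredients, so the proof is essentially a matter of organizing a proof by elimination over the sign of $\lambda$ and over the gradient/non-gradient dichotomy. I would structure it as follows.

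First I would dispose of the steady and shrinking cases. Suppose $g$ is steady, i.e. $\lambda=0$. Then one checks (tracing the soliton equation, using that a homogeneous manifold has constant scalar curvature so the usual Ricci-soliton identities force $R\equiv 0$ and in fact $Ric(g)=0$) that $g$ is Ricci-flat; by the theorem of Alekseevskii--Kimel'fel'd \cite{AK} a homogeneous Ricci-flat metric is flat, hence $g$ is trivial, contradiction. Suppose next $g$ is shrinking, $\lambda>0$. Here I would invoke \cite[Theorem 1.2]{Naber} together with \cite{PW}: a shrinking homogeneous Ricci soliton is isometric to a quotient of a product $P\times\R^k$ with $P$ homogeneous Einstein of positive scalar curvature, which is precisely the definition of a trivial Ricci soliton — again a contradiction. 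Hence the only remaining possibility is $\lambda<0$, i.e. $g$ is expanding.

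Next I would rule out the gradient case. If $g$ were a homogeneous Ricci soliton of gradient type, the same structural results quoted above apply: by \cite{PW} the classification of shrinking homogeneous solitons extends verbatim to homogeneous solitons of gradient type, and combined with the steady computation this forces $g$ to be either Einstein or a product of a homogeneous Einstein metric with a Euclidean factor — in either case trivial. So a nontrivial homogeneous Ricci soliton cannot be of gradient type. Finally, for the noncompactness: since $g$ is expanding, \cite{I} gives directly that $M$ must be noncompact; alternatively, by Perelman \cite{Perelman} every Ricci soliton on a compact manifold is of gradient type, which combined with what we just proved would make $g$ trivial — contradiction. Either route closes the argument.

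The only genuinely nontrivial input is the deep structural classification of shrinking (and gradient) homogeneous Ricci solitons from \cite{Naber} and \cite{PW}; everything else is bookkeeping with the definition of triviality and with the cited facts \cite{AK}, \cite{I}, \cite{Perelman}. Since the lemma is explicitly attributed to Lauret \cite{La3} and the excerpt has already paraphrased his argument in prose, I expect the formal proof to be short — essentially the three-paragraph case analysis above written out — with no hidden obstacle beyond correctly citing the external results. The main point of care is simply to make sure the "gradient type" and the "sign of $\lambda$" eliminations are kept logically separate so that the three conclusions (expanding, non-gradient, noncompact) are each derived cleanly.
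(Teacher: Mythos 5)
Your proposal is correct and follows essentially the same route as the paper, which itself only assembles the cited facts in prose: steady gradient solitons are Ricci-flat hence flat by \cite{AK}, shrinking ones are rigid by \cite{Naber} and \cite{PW}, the gradient case reduces to these by \cite{PW}, and noncompactness follows from \cite{I} or \cite{Perelman}. The case analysis and the external inputs you invoke coincide with the paper's argument, so there is nothing to add.
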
 

All known examples of nontrivial homogeneous Ricci solitons are
left invariant metrics on simply connected solvable Lie groups whose Ricci
operator is a multiple of the identity modulo derivations,
and they are called \emph{solsolitons} or, in the nilpotent case, \emph{nilsolitons}.

Let $N$ be a simply connected nilpotent Lie group, and denote by $\frak n$ its Lie algebra.
A left invariant metric $g$ on $N$ is  called a \emph{Ricci nilsoliton metric} 
(or simply \emph{nilsoliton metric}) if its Ricci endomorphism 
$Ric(g)$ differs from a derivation $D$ of 
$\frak n$ by a scalar multiple of the identity map $I$, i.e.  
if there exists a real number $\lambda$ such  that  
\begin{equation} \label{condition-derivation} 
Ric(g)= \lambda I + D.
\end{equation}
Clearly, any left invariant metric which satisfies \eqref{condition-derivation} is 
automatically a Ricci soliton.

Nilsoliton metrics have properties that make them preferred left invariant metrics 
on nilpotent Lie groups in the absence of Einstein metrics. Indeed,  
nonabelian nilpotent Lie groups do not admit left invariant Einstein metrics (\cite{Milnor}).

From now on, we will always identify a left invariant metric on a Lie group 
$N$ with an inner product 
$\langle \cdot, \cdot \rangle_{\mathfrak{n}}$ on the Lie algebra 
$\mathfrak{n}$ of $N$.  
A Lie algebra $\mathfrak{n}$ endowed with an inner product 
is usually called in the literature a {\em metric Lie algebra} and is denoted as 
the pair $(\mathfrak{n},\langle \cdot, \cdot \rangle_{\mathfrak{n}})$. 

We  will say that a metric nilpotent Lie algebra $(\frak n, \langle \cdot, \cdot \rangle _{\frak n})$ is 
a \emph{nilsoliton} if there exists a real number $\lambda$ 
and a derivation $D$ of $\frak n$ such that 
%%$$
\begin{equation}\label{nilsoliton:lie-algebra}
Ric(\frak n, \langle \cdot, \cdot \rangle _{\frak n}) = \lambda I + D.
\end{equation}
%%$$
Not all nilpotent Lie algebras admit nilsoliton inner products, but if a nilsoliton 
inner product exists, then it is unique up to automorphism and scaling \cite{La01}.
A computational method for classifying  nilpotent Lie algebras
having a nilsoliton inner product in a large subclass of the set
of all nilpotent Lie algebras, has been recently introduced in \cite{KP}.
By Lauret's results it turns out that nilsoliton metrics on simply connected nilpotent 
Lie groups $N$ are strictly related to Einstein metrics on the so-called solvable  
rank-one extensions of $N$. 

\begin{definition} 
\emph{Let $(\mathfrak{n},\langle \cdot, \cdot \rangle)$ be a metric nilpotent Lie algebra. 
A} metric solvable extension 
of \emph{$(\mathfrak{n},\langle \cdot, \cdot \rangle)$ is
a metric solvable Lie algebra 
$(\mathfrak{s}=\mathfrak{n}\oplus \mathfrak{a}, \langle \cdot, \cdot \rangle_{\mathfrak{s}})$
such that 
$\mathfrak{n}=[\mathfrak{s}, \mathfrak{s}]$ and 
$\langle \cdot, \cdot \rangle_{\mathfrak{s}}|_{\mathfrak{n}\times \mathfrak{n}}=\langle \cdot, \cdot \rangle$. 
The metric solvable Lie algebra 
$(\mathfrak{s}, \langle \cdot, \cdot \rangle_{\mathfrak{s}})$ is} 
standard, \emph{or has} standard type,
\emph{if $\frak a$ is an abelian subalgebra of $\mathfrak{s}$; 
in this case, the dimension of $\mathfrak{a}$ 
is called the} rank of the metric solvable extension. 
\end{definition}

Heber showed in \cite{He} that a simply connected solvable Lie group admits 
at most one Einstein left invariant metric up to isometry and scaling. Moreover, 
he proved that the study of Einstein metrics 
on simply connected solvable Lie groups, of standard type,
can be reduced to the rank-one case, 
that is, $dim \, \mathfrak{a}=1$. 

\medskip

Recently, Lauret in \cite{La1} and \cite{La2} proved the following 

\begin{theorem} $($\cite{La1, La2}$)$ \label{Lauret Theorem}
Any Einstein metric solvable Lie algebra 
$(\mathfrak{s}, \langle \cdot, \cdot \rangle_{\mathfrak{s}})$ has to be of standard type.
Moreover, a simply connected nilpotent Lie group $N$ admits a  
nilsoliton metric if and only if its Lie algebra $\frak n$ is an Einstein nilradical, that is,
$\frak n$ possesses an inner product $\langle \cdot, \cdot \rangle$ such that 
$(\mathfrak{n}, \langle \cdot, \cdot \rangle)$ has a 
metric solvable extension which is Einstein.
\end{theorem}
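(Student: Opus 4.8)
The plan is to separate the statement into its two assertions, which are of very different depth, and to reduce the equivalence to the standardization part. For the implication \emph{nilsoliton $\Rightarrow$ Einstein nilradical}: starting from a metric nilpotent Lie algebra $(\mathfrak{n},\langle\cdot,\cdot\rangle)$ with $Ric_{\mathfrak{n}}=\lambda I+D$, $\lambda<0$, $D\in\mathrm{Der}(\mathfrak{n})$, I would form the rank-one extension $\mathfrak{s}=\R H\oplus\mathfrak{n}$ with $\mathrm{ad}_H|_{\mathfrak{n}}$ a suitable positive multiple of the symmetric part of $D$ and $\langle H,H\rangle$ tuned so that the Einstein equation closes up; this is Heber's explicit construction, and it produces a \emph{standard} Einstein metric solvable extension of $(\mathfrak{n},\langle\cdot,\cdot\rangle)$, so in particular $\mathfrak{n}$ is an Einstein nilradical. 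Conversely, if $\mathfrak{n}$ is an Einstein nilradical, the first assertion tells us the corresponding Einstein extension $\mathfrak{s}$ is automatically standard, say $\mathfrak{s}=\mathfrak{n}\oplus\mathfrak{a}$ with $\mathfrak{a}=\mathfrak{n}^{\perp}$ abelian and $\mathfrak{n}=[\mathfrak{s},\mathfrak{s}]$; then I would use the Ricci formula for metric Lie algebras, $Ric_{\mathfrak{s}}=M-\tfrac12 B-S(\mathrm{ad}\,H)$ (with $M$ the moment-map operator, $B$ the Killing-form operator, $H$ the mean curvature vector, $S(\cdot)$ the symmetric part), restrict it to $\mathfrak{n}$ using $\mathfrak{a}\perp\mathfrak{n}$, and check that $Ric_{\mathfrak{s}}|_{\mathfrak{n}}$ differs from $Ric_{\mathfrak{n}}$ only by a scalar multiple of $I$ and by the symmetric parts of $\mathrm{ad}_X|_{\mathfrak{n}}$ ($X\in\mathfrak{a}$) and of $\mathrm{ad}_H|_{\mathfrak{n}}$ --- all of them derivations of $\mathfrak{n}$ --- so that $Ric_{\mathfrak{n}}=\lambda I+D$ for some $\lambda$ and $D\in\mathrm{Der}(\mathfrak{n})$. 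The first assertion is needed precisely to exclude a non-standard Einstein extension, for which this restriction argument would fail.

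The first assertion --- that every Einstein metric solvable Lie algebra is standard --- is the deep point, and I would follow Lauret's variational/GIT strategy. After the preliminary reductions (one may take $Ric_{\mathfrak{s}}=cI$ with $c<0$, since the Ricci-flat case is flat by \cite{AK}; $[\mathfrak{s},\mathfrak{s}]$ is nilpotent and one shows it is the nilradical $\mathfrak{n}$; the mean curvature vector $H$, $\langle H,X\rangle=\trace(\mathrm{ad}_{\mathfrak{s}}X)$, lies in $\mathfrak{a}=\mathfrak{n}^{\perp}$ because $\mathrm{ad}_X$ is nilpotent for $X\in\mathfrak{n}$), one encodes the bracket of $\mathfrak{n}$ together with the symmetric maps $S(\mathrm{ad}_X)$, $X\in\mathfrak{a}$, as data attached to a point $\mu$ in the representation space $V=(\Lambda^2\R^n)^{*}\otimes\R^n$ of $\GL_n(\R)$. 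The Ricci operator of a metric nilpotent Lie algebra is, up to a positive scalar depending on $\|\mu\|$, the moment map $m(\mu)$ of this $\GL_n(\R)$-action, so the Einstein condition on $\mathfrak{s}$ translates into an algebraic criticality condition on $\mu$ relative to its $\GL_n(\R)$-orbit and to those symmetric maps. The crux, and what I expect to be the main obstacle, is then a rigidity result from real geometric invariant theory: the stratification of $V$ by the type of the optimal one-parameter subgroup associated with $\mu$, the structure of minimal vectors on the relevant stratum (the real Kempf--Ness/Ness theory), and a convexity estimate for the norm along $\GL_n(\R)$-orbits, which together force the $\mathfrak{a}$-part of the structure to be compatible only with $[\mathfrak{a},\mathfrak{a}]=0$. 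The bulk of the work in \cite{La2} goes into establishing this real-GIT rigidity and into verifying that the data of an Einstein solvmanifold place $\mu$ in exactly the stratum where it applies; the Riemannian inputs (the Ricci formula, the mean curvature vector, the action of $\mathrm{ad}_{\mathfrak{a}}$ on $\mathfrak{n}$) are comparatively routine.

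Finally, I would quote Heber's results \cite{He} for the extra structure that makes the two assertions mesh: a standard Einstein metric solvable extension reduces to the rank-one case $\dim\mathfrak{a}=1$, and such an extension, when it exists, is unique up to isometry and scaling. Combined with \cite{La1} this yields the stated characterisation of Einstein nilradicals and closes the loop between ``admits a nilsoliton'' and ``is an Einstein nilradical''.
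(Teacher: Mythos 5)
This theorem is quoted in the paper verbatim from \cite{La1, La2} (together with \cite{He}); the paper contains no proof of it, so there is no internal argument to measure yours against. Your outline is a faithful road map of how the proof goes in those references: the direction ``nilsoliton $\Rightarrow$ Einstein nilradical'' via Heber's explicit rank-one extension $\mathfrak{s}=\R H\oplus\mathfrak{n}$, the converse by restricting the Ricci operator of the (necessarily standard) Einstein extension to the nilradical, and the standardness assertion via Lauret's moment-map and stratification machinery from real geometric invariant theory. In that sense you have correctly identified the architecture and the division of labour between \cite{He}, \cite{La1} and \cite{La2}.

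As a proof, however, your text has the gap it itself acknowledges: the first assertion --- that every Einstein metric solvable Lie algebra is standard --- is only described, not established. ``The bulk of the work in \cite{La2}'' \emph{is} the theorem; invoking it is a citation, not an argument, and nothing in your sketch would let a reader reconstruct the convexity estimate or the stratification argument. Two steps you label routine also conceal genuine content: (i) that $S(\mathrm{ad}_H|_{\mathfrak{n}})$ and the $S(\mathrm{ad}_X|_{\mathfrak{n}})$, $X\in\mathfrak{a}$, are actually \emph{derivations} of $\mathfrak{n}$ is a structural consequence of the Einstein condition (resting on Heber's eigenvalue and rank-one reduction results), not an automatic feature of standard extensions, and without it the converse direction does not close; (ii) the identification of the Ricci operator of a metric nilpotent Lie algebra with the moment map of the $\GL_n(\R)$-action requires the precise normalization worked out in \cite{La01, La1}. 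Since the paper itself treats the statement as an external input, none of this is a defect relative to the paper; but your proposal should be read as an accurate summary of the literature rather than a self-contained proof.
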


\end{section}

%%%%%%%%%%%%%%%%%%%%%%%%%%%%%%%%%%%%%%%%%%%%%%%%%%%%%%%%%%%%%%%%%%%%%%%%%%%%%%%%%%%%%%
%NILSOLITON METRICS DETERMINED BY CLOSED $G_2$-STRUCTURES%%%%%%%%%%%%%%%%%%%%%%%%%%%%%%%%%%%%%%%%%%%%%%%%%%%%%%%%%%%%%%%%%%%%%%%
%%%%%%%%%%%%%%%%%%%%%%%%%%%%%%%%%%%%%%%%%%%%%%%%%%%%%%%%%%%%%%%%%%%%%%%%%%%%%%%%%%%%%%

\begin{section}{Nilsoliton metrics determined by closed $G_2$ forms} \label{sectclassif}

In this section we prove that, up to isomorphism, there
are only four (nonabelian)  $s$-step nilpotent Lie  groups   
$(s=2, 3)$ with a nilsoliton inner product determined by a 
left invariant closed $G_2$-structure.
We also show  that, up to isomorphism,  there is a unique $7$-dimensional nilpotent Lie group with
a left invariant closed $G_2$-structure but not having nilsoliton metrics.

Let $N$ be a $7$-dimensional simply connected nilpotent
Lie group with Lie algebra $\mathfrak{n}$. Then, a $G_{2}$-structure 
on $N$ is left invariant if and only if the corresponding
$3$-form is left invariant. Thus, a left invariant $G_{2}$-structure on 
$N$ corresponds to an element $\varphi$ of $\Lambda^3({\mathfrak{n}}^*)$ that 
can be written as 
\begin{equation*}\label{eqn:3-forma G2}
 \varphi=e^{127}+e^{347}+e^{567}+e^{135} -e^{236}-e^{146}-e^{245},
\end{equation*}
with respect to some coframe $\{e^1,\dotsc, e^7\}$ on ${\mathfrak{n}}^*$, and
we shall say that $\varphi$ defines a $G_{2}$-structure on $\mathfrak{n}$.  
A $G_{2}$-structure on $\mathfrak{n}$ is said to be {\em closed} if $\varphi$ is closed, i.e.
\[
d\varphi=0,\]
where $d$ denotes the Chevalley-Eilenberg differential on ${\mathfrak{n}}^*$. 

From now on, given a 
$7$-dimensional Lie algebra $\mathfrak{n}$ whose dual is spanned by 
$\{ e^1,\ldots ,e^7\}$, we will write $e^{ij}= e^i\wedge e^j$,
$e^{ijk}= e^i\wedge e^j\wedge e^k$, and so forth. 
Moreover, by  the notation
$$
\mathfrak{n}=(0,0,0,0,e^{12},e^{13},0),
$$
we mean that the dual space ${\mathfrak{n}}^*$ of the Lie algebra $\frak n$ has a fixed basis 
$\{e^1,\dotsc, e^7\}$ such that
$$
de^5=e^{12},\quad \;de^6=e^{13}, \quad \; d e^1 = d e^2 = d e^3 = d e^4 = d e^7=0.
$$
The classification of nilpotent Lie algebras admitting a closed $G_2$-structure is given in \cite{CF} as follows.
\begin{theorem}\label{classification}
Up to isomorphism, there are exactly $12$ nilpotent Lie algebras that admit a 
closed $G_{2}$-structure.
They are:
$$
\begin{array}{l}
\mathfrak{n}_1 = (0,0,0,0,0,0,0),\\
\mathfrak{n}_2=(0,0,0,0,e^{12},e^{13},0),\\
\mathfrak{n}_{3}=(0,0,0,e^{12},e^{13}, e^{23},0),\\
\mathfrak{n}_{4}=(0,0,e^{12},0,0,e^{13}+e^{24},e^{15}),\\
\mathfrak{n}_{5}=(0,0,e^{12},0,0,e^{13},e^{14}+e^{25}),\\
\mathfrak{n}_{6}=(0,0,0,e^{12},e^{13},e^{14},e^{15}),\\
\mathfrak{n}_{7}=(0,0,0,e^{12},e^{13},e^{14}+e^{23},e^{15}),\\ 
\mathfrak{n}_{8}=(0,0,e^{12},e^{13}, e^{23},e^{15}+e^{24},e^{16}+e^{34}),\\
\mathfrak{n}_9=(0,0,e^{12}, e^{13}, e^{23}, e^{15}+e^{24},e^{16}+e^{34}+e^{25}),\\
\mathfrak{n}_{10}=(0,0,e^{12},0,e^{13}+e^{24},e^{14},e^{46}+e^{34}+e^{15}+e^{23}),\\
\mathfrak{n}_{11}=(0,0,e^{12},0,e^{13},e^{24}+e^{23}, e^{25}+e^{34}+e^{15}+e^{16}-3e^{26}),\\
\mathfrak{n}_{12}=(0,0,0,e^{12},e^{23},-e^{13},2e^{26}-2e^{34}-2e^{16}+2e^{25}).
\end{array}
$$
\end{theorem}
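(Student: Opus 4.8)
The plan is to prove Theorem \ref{classification} by combining two ingredients: a classification of $7$-dimensional nilpotent Lie algebras (available in the literature, e.g.\ the tables of Gong or Magnin), and a systematic analysis of when the closed-ness condition $d\varphi = 0$ can be satisfied after a suitable change of coframe. Concretely, I would proceed as follows. First, recall that a left invariant $G_2$-structure on $N$ is equivalent to the choice of a coframe $\{e^1,\dots,e^7\}$ of $\mathfrak n^*$ in which the fundamental form takes the normal expression $\varphi = e^{127}+e^{347}+e^{567}+e^{135}-e^{236}-e^{146}-e^{245}$; two such coframes give isomorphic (as $G_2$-structures) data precisely when they differ by an element of $\mathrm{GL}(7,\R)$ whose action fixes $\varphi$, i.e.\ by $\mathrm{G}_2 \subset \mathrm{SO}(7)$. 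So the problem reduces to: for each isomorphism class of $7$-dimensional nilpotent Lie algebra $\mathfrak n$, decide whether the $\mathrm{GL}(7,\R)$-orbit of the structure constants meets the subvariety $\{d\varphi = 0\}$, equivalently whether there is some coframe adapted to the Lie algebra structure in which $\varphi$ is closed.

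The key structural observation to exploit is that closedness of $\varphi$ forces strong constraints on the lower central series. Writing $\varphi = e^{7}\wedge \omega + \psi$ with $\omega = e^{12}+e^{34}+e^{56}$ and $\psi = e^{135}-e^{236}-e^{146}-e^{245}$, the equation $d\varphi = 0$ splits (using $de^7$, $d\omega$, $d\psi$) into conditions that restrict how $de^i$ can involve the various generators. In particular one can extract dimension bounds on $[\mathfrak n,\mathfrak n]$, on the center, and on the terms of the central series, and one shows that the nilpotency step is at most $4$ and that $\dim[\mathfrak n,\mathfrak n]\le 4$ (indeed the $b_1 \ge 3$ type constraints). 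This cuts the list of $7$-dimensional nilpotent Lie algebras — of which there are around forty (plus a one-parameter family) — down to a manageable finite sublist of candidates. I would organize this as a sequence of lemmas: (i) necessary numerical conditions from $d\varphi=0$; (ii) elimination of all nilpotent Lie algebras violating them; (iii) for each surviving candidate, either exhibit an explicit adapted closed coframe (giving the twelve algebras $\mathfrak n_1,\dots,\mathfrak n_{12}$ in the stated normal forms) or prove no such coframe exists by a direct obstruction argument on the structure equations.

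For the positive direction — showing each $\mathfrak n_i$ in the list does admit a closed $G_2$-structure — the work is just to write down, for the given structure equations, an explicit coframe $\{e^i\}$ for which $d\varphi = 0$; this is the routine verification that $\varphi$ built from that coframe is indeed closed, which I would record in a table rather than belabor. For the negative direction — showing the candidates that survive the numerical filter but are not on the list genuinely fail — the argument is more delicate: one must show that for \emph{every} choice of coframe compatible with the fixed isomorphism type, $d\varphi \ne 0$. The standard technique is to parametrize the possible adapted coframes (using the freedom of $\mathrm{GL}(7,\R)$ modulo the stabilizer of the Lie bracket, i.e.\ $\mathrm{Aut}(\mathfrak n)$, intersected with the condition that $\varphi$ has the normal form), reduce to finitely many normal forms of the bracket, and then show the resulting polynomial system $d\varphi = 0$ has no solution — typically by isolating one offending $4$-form component that cannot be killed.

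The main obstacle, as usual in such classifications, is precisely this last step: making the case analysis for the non-examples genuinely exhaustive rather than merely plausible. The danger is missing an exotic adapted coframe. I would mitigate this by working invariantly wherever possible — phrasing the obstructions in terms of $\mathrm{GL}(7,\R)$-invariant (or $\mathrm{Aut}(\mathfrak n)$-equivariant) data attached to $d\varphi$, such as the rank of certain induced maps $\Lambda^2\mathfrak n^* \to \Lambda^3\mathfrak n^*$ and the behavior of $d\varphi$ restricted to the ideals of the lower central series — so that the vanishing or non-vanishing can be checked once per isomorphism class rather than per coframe. The bookkeeping is heavy but elementary; the conceptual content is entirely in the numerical filtering lemmas of step (i).
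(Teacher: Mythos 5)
The paper does not prove this theorem at all: it is quoted verbatim from the reference \cite{CF} (Conti--Fern\'andez), so there is no internal proof to compare against. Your proposal is therefore an attempt to reconstruct the argument of \cite{CF}, and its overall shape --- obstruction lemmas that filter the known list of $7$-dimensional nilpotent Lie algebras, followed by explicit adapted coframes for the survivors and case-by-case exclusion of the rest --- is indeed the right shape (the paper itself records one such obstruction as Lemma \ref{Obs1}, taken from \cite{CF}). But as written the proposal has a concrete error and, beyond that, is a plan rather than a proof.

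The concrete error is in your step (i), the ``numerical filtering lemmas'' on which you say the entire conceptual content rests. You assert that $d\varphi=0$ forces $\dim[\mathfrak n,\mathfrak n]\le 4$ and ``$b_1\ge 3$ type constraints.'' Both bounds are contradicted by the theorem's own list: for $\mathfrak n_8=(0,0,e^{12},e^{13},e^{23},e^{15}+e^{24},e^{16}+e^{34})$ and $\mathfrak n_9$ only $e^1,e^2$ are closed, so $b_1=2$ and $\dim[\mathfrak n,\mathfrak n]=5$. A filter built on those inequalities would discard two algebras that do admit closed $G_2$-structures, so the classification obtained would be wrong. (Relatedly, your count of ``around forty'' $7$-dimensional nilpotent Lie algebras is a large underestimate --- the real classification runs to well over a hundred indecomposable classes plus one-parameter families --- which matters for whether the brute-force sieve you describe is actually feasible.) The second, structural, gap is the one you yourself flag: the entire content of a theorem of this kind is the exhaustive case analysis, both the verification that each $\mathfrak n_i$ carries a closed coframe and, much harder, the proof that every other isomorphism class admits \emph{no} adapted coframe with $d\varphi=0$. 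Your proposal describes how one would organize that analysis but does not carry out a single case, so it cannot be assessed as a proof; the correct resolution here is simply to cite \cite{CF}, as the paper does.
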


Using Table 1 in \cite{Fe1} we  can determine which indecomposable Lie algebras $\mathfrak{n}_i$ $(4\leq i\leq 12)$
do not  have  nilsoliton inner products. 
Note that  the existence of nilsolitons
on $\mathfrak{n}_2$ and $\mathfrak{n}_3$ is not studied in \cite{Fe1} since they are decomposable.
Concretely, the correspondence between the indecomposable Lie  algebras of Theorem \ref{classification} and Table 1 in \cite{Fe1} is the following: 
\begin{equation*}
\begin{aligned}
&\frn_4 \cong 3.8, \qquad \frn_5 \cong 3.11\qquad \frn_6 \cong 3.20, \qquad \frn_7 \cong 2.39,\\
&\frn_8 \cong 2.5, \qquad \frn_9 \cong 1.1(iv), \qquad 	\text{ and } \qquad \frn_{10} \cong 1.3(i_1).
\end{aligned}
\end{equation*} 

Moreover, $\frn_{11}$ and $\frn_{12}$ are respectively isomorphic to the real form of $1.2 (i_{-3})$ and $3.1 (i_{2})$.
 In particular, we   have that  $\mathfrak{n}_9$ is the only $7$-dimensional nilpotent Lie algebra with a closed $G_2$ form but not admitting a nilsoliton.

\begin{remark} Note that the abelian Lie algebra $\mathfrak{n}_1$ admits as rank-one Einstein 
solvable extension the Lie algebra  $\frak s_1$ with structure equations 
$$
(ae^{18},ae^{28}, ae^{38}, ae^{48}, ae^{58}, ae^{68}, ae^{78},0),
$$
for some real number $a \neq 0$, 
and the nilsoliton inner product on $\frak n_1$ is trivial because it is flat.  
Since we are interested in nontrivial nilsoltons inner products, in the sequel when we refer to 
a nilpotent Lie algebra we will mean a nonabelian nilpotent Lie algebra.  \end{remark}

In order to classify the Lie algebras $\mathfrak{n}_i$ admitting a (nontrivial) nilsoliton 
but with no closed $G_2$ forms inducing the nilsoliton, we need to recall
 the following obstruction proved in \cite{CF} for the existence of a closed 
$G_2$-structure on a $7$-dimensional Lie algebra.

\begin{lemma}[\cite{CF}]\label{Obs1} 
Let $\mathfrak{g}$ be a 7-dimensional Lie algebra. If there is a non-zero 
$X\in\mathfrak{g}$ such that $(\iota_X\phi)^3=0$ (where $\iota_X$ denotes the contraction by
$X$) for every closed 3-form $\phi$ on $\mathfrak{g}$, then 
$\mathfrak{g}$ has  no closed $G_2$-structures. 
\end{lemma}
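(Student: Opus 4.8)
The plan is to argue by contradiction, reducing everything to one elementary fact about $G_2$ three-forms. Suppose, contrary to the conclusion, that $\mathfrak g$ admits a closed $G_2$-structure, i.e.\ there is a three-form $\varphi$ on $\mathfrak g$ lying in the $\mathrm{GL}(\mathfrak g)$-orbit of the model form $e^{127}+e^{347}+e^{567}+e^{135}-e^{236}-e^{146}-e^{245}$ and satisfying $d\varphi=0$ (where $d$ is the Chevalley--Eilenberg differential). In particular $\varphi$ is itself a \emph{closed} three-form on $\mathfrak g$, so the hypothesis of the lemma applies to $\phi=\varphi$ and yields $(\iota_X\varphi)^3=0$ for the given nonzero $X$.

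The heart of the matter is that this cannot happen for a genuine $G_2$ form: for any $G_2$ three-form $\varphi$ on a $7$-dimensional vector space and any $X\neq 0$ one has $(\iota_X\varphi)^3\neq 0$. To prove this, note first that $\iota_X(\iota_X\varphi)=0$; so, splitting $\mathfrak g=\langle X\rangle\oplus W$ with $\dim W=6$ and letting $X^*\in\mathfrak g^*$ be dual to $X$ (that is, $X^*(X)=1$ and $X^*|_W=0$), we find that $\omega:=\iota_X\varphi$ lies in $\Lambda^2 W^*$ and $\varphi=X^*\wedge\omega+\psi$ for some $\psi\in\Lambda^3 W^*$. Wedging and using $\Lambda^7 W^*=0$ gives $\iota_X\varphi\wedge\iota_X\varphi\wedge\varphi=\pm\,X^*\wedge\omega^3$. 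On the other hand, specializing \eqref{metric} to $Y=X$ gives $\iota_X\varphi\wedge\iota_X\varphi\wedge\varphi=6\,g_\varphi(X,X)\,vol$, and since the induced metric $g_\varphi$ is Riemannian (positive definite) this $7$-form is nonzero for $X\neq 0$. As $\alpha\mapsto X^*\wedge\alpha$ is an isomorphism from $\Lambda^6 W^*$ onto $\Lambda^7\mathfrak g^*$, we conclude $\omega^3=(\iota_X\varphi)^3\neq 0$, contradicting the previous paragraph. Hence $\mathfrak g$ has no closed $G_2$-structure.

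There is no real obstacle in this argument: the whole content is the nondegeneracy of $\iota_X\varphi$ on the quotient $\mathfrak g/\langle X\rangle$, and the only points requiring care are that the hypothesis of the lemma is genuinely invoked on $\phi=\varphi$ itself, and that positive definiteness of $g_\varphi$ is precisely what rules out the degenerate case. (If one prefers a bare-hands verification of $(\iota_X\varphi)^3\neq 0$: by transitivity of the $G_2$-action on the unit sphere of $\R^7$, together with the degree-$6$ homogeneity of $X\mapsto(\iota_X\varphi_0)^3$, it suffices to treat $\varphi=\varphi_0$ and $X=e_7$, for which $\iota_{e_7}\varphi_0=e^{12}+e^{34}+e^{56}$ and $(\iota_{e_7}\varphi_0)^3=6\,e^{123456}\neq 0$.)
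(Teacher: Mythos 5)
Your proof is correct. Note that the paper itself gives no proof of this lemma: it is quoted verbatim from the reference \cite{CF}, where the argument is essentially the one you give — namely that $\iota_X\varphi\wedge\iota_X\varphi\wedge\varphi=6\,g_\varphi(X,X)\,vol\neq0$ for a $G_2$ form $\varphi$ and $X\neq0$, which forces $(\iota_X\varphi)^3\neq0$ and contradicts the hypothesis applied to the closed form $\phi=\varphi$. Your decomposition $\varphi=X^*\wedge\omega+\psi$ cleanly isolates why the nonvanishing of the $7$-form is equivalent to $\omega^3\neq0$, and the explicit check on $\varphi_0$ and $X=e_7$ removes any worry about the (mildly implicit) definition of $g_\varphi$ via \eqref{metric}; both steps are sound.
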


By \cite [Proposition 4.5]{Schulte}
if $\varphi$  is  a  $G_2$-structure on a $7$-dimensional Lie algebra  and we choose 
a vector $X \in \frak g$  of length one with respect to $g_{\varphi}$,  then on  the orthogonal 
complement  of  the span of  $X$  one has an $SU(3)$-structure given by 
 the 2-form $\alpha = \iota_X \varphi$  and the 3-form
$\beta = \varphi - \alpha \wedge \eta$, where $\eta=\iota_{X}(g_{\varphi})$. So 
in particular $\alpha \wedge  \beta =0$.

By using these results we can prove the following proposition

\begin{proposition}\label{nilsoliton-NO-closed G2}
The Lie algebra $\mathfrak{n}_i$ $(i=3,5,7,8,11)$ has a nilsoliton inner product
but  no closed $G_2$-structure inducing the nilsoliton inner product. 
\end{proposition}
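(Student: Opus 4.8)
The plan is to handle the two halves of the statement separately. For the existence of a nilsoliton inner product on each $\mathfrak{n}_i$ with $i \in \{3,5,7,8,11\}$, I would invoke the classification references: $\mathfrak{n}_3$ is decomposable (it is the direct sum of the free $2$-step nilpotent Lie algebra on $3$ generators with $\R$), and its nilsoliton is well known; for the indecomposable cases $\mathfrak{n}_5 \cong 3.11$, $\mathfrak{n}_7 \cong 2.39$, $\mathfrak{n}_8 \cong 2.5$ and (the real form of) $\mathfrak{n}_{11} \cong 1.2(i_{-3})$, the existence of a nilsoliton is recorded in Table 1 of \cite{Fe1}. For each case I would exhibit explicitly a diagonal inner product $\langle \cdot,\cdot\rangle_{\mathfrak{n}_i}$ for which a direct computation of the Ricci endomorphism gives $Ric = \lambda I + D$ with $D$ a (diagonal) derivation; this is a routine calculation with Milnor-type structure constants, and I would simply tabulate the resulting $\lambda$ and $D$.

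The substantive part is the nonexistence of a closed $G_2$-structure \emph{inducing} that nilsoliton inner product. Here the strategy is: fix the nilsoliton metric $\langle\cdot,\cdot\rangle_{\mathfrak{n}_i}$ from the first part (unique up to automorphism and scaling by \cite{La01}, so it suffices to work with one representative), and suppose for contradiction that some closed $3$-form $\varphi$ on $\mathfrak{n}_i$ satisfies $g_\varphi = \langle\cdot,\cdot\rangle_{\mathfrak{n}_i}$ via \eqref{metric}. I would then look for an obstruction coming from the interplay of closedness of $\varphi$ with the fixed metric. The cleanest tool is Lemma \ref{Obs1}: if there is a nonzero $X$ with $(\iota_X\phi)^3 = 0$ for \emph{every} closed $3$-form $\phi$, then no closed $G_2$-structure exists at all; but since these $\mathfrak{n}_i$ \emph{do} admit closed $G_2$-structures (they are on Chiossi--Fino's list), that blunt obstruction is unavailable, and the point is subtler — it is the compatibility with the \emph{specific} metric that fails. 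So instead I would compute the space $Z^3(\mathfrak{n}_i)$ of closed $3$-forms explicitly (a linear algebra computation from the structure equations), parametrize a general closed $\varphi \in Z^3$, impose that the associated bilinear form $b_\varphi(X,Y)\,vol = \tfrac16\,\iota_X\varphi\wedge\iota_Y\varphi\wedge\varphi$ be proportional to the nilsoliton, and derive a contradiction — typically the proportionality forces some pairing $b_\varphi(e_i,e_j)$ that ought to vanish (because the nilsoliton is diagonal) to be nonzero for algebraic reasons, or forces a diagonal entry to have the wrong sign, contradicting positive-definiteness. A convenient packaging of this: pick the unit vector $X$ dual to the direction in which $\mathfrak{n}_i$ degenerates, use the $SU(3)$-reduction of \cite[Prop.~4.5]{Schulte} ($\alpha = \iota_X\varphi$, $\beta = \varphi - \alpha\wedge\eta$, with $\alpha\wedge\beta = 0$), and observe that for the nilsoliton metric the forms $\alpha$ and $\beta$ are forced into a subspace where $\alpha\wedge\beta\neq 0$ whenever $\alpha^3\neq 0$ — the contradiction.

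The order of operations I would follow is: (1) state the nilsoliton data $(\langle\cdot,\cdot\rangle_i,\lambda_i,D_i)$ for each $i$, with a one-line verification sketch; (2) record $Z^3(\mathfrak{n}_i)$ from the structure equations; (3) for the supposed $\varphi$ inducing the nilsoliton, use an adapted orthonormal coframe $\{e^1,\dots,e^7\}$ in which $\varphi$ has the standard form and $g_\varphi = \sum (e^i)^2$; (4) intersect the standard-form condition with closedness and read off the contradiction, case by case. The main obstacle is step (4): because all five algebras genuinely carry closed $G_2$-structures, the obstruction must be metric-sensitive, so one cannot appeal to a single clean closed-form identity — each of the five cases will need its own short argument pinning down why \emph{every} closed $\varphi$ compatible with the fixed diagonal metric is impossible, which amounts to checking that the orbit of the nilsoliton under $\mathrm{GL}(7,\R)$ misses the (open) set of metrics induced by closed $3$-forms. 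I would try to unify the five by the $SU(3)$-reduction argument above, falling back to explicit coframe computations for any case where the uniform argument does not immediately close.
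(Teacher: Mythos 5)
Your proposal matches the paper's proof in all essentials: exhibit the nilsoliton explicitly as the inner product making a (suitably rescaled) basis orthonormal, invoke uniqueness of the nilsoliton up to automorphism and scaling, parametrize the generic closed $3$-form, impose the $SU(3)$-reduction condition $\alpha\wedge\beta=0$ of \cite[Proposition 4.5]{Schulte} for each $X=e_i$ together with $G_\phi=I_7$, and close each of the five cases with an explicit Gram-matrix contradiction. The only difference in framing is that the paper does still invoke Lemma \ref{Obs1} — not as a global obstruction but as a nondegeneracy constraint forcing certain coefficients of the putative form to be nonzero (e.g.\ $c_{257}-c_{167}\neq 0$ on $\mathfrak{n}_3$, $c_{167}c_{237}c_{457}\neq 0$ on $\mathfrak{n}_5$), which is precisely the ingredient your positive-definiteness requirement supplies, so your plan closes the same way.
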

\begin{proof}

To prove that $\mathfrak{n}_3$ has a nilsoliton, we consider  the Lie algebra $\mathfrak{n}_3$ defined by the equations given in Theorem \ref{classification}.
Let $\langle \cdot, \cdot \rangle_{\mathfrak{n}_3}$ be the inner product on $\frak n_3$
such that $\{ e^1, \ldots, e^7 \}$ is orthonormal. 
Then, $\langle \cdot, \cdot \rangle_{\mathfrak{n}_3}$ is a nilsoliton because  its Ricci tensor
$$Ric=diag\Big(-1,-1,-1,\frac{1}{2},\frac{1}{2},\frac{1}{2},0\Big)$$ 
satisfies \eqref{nilsoliton:lie-algebra}, 
for $\lambda =  -5/2$ and 
$$D=diag\Big(\frac{3}{2},\frac{3}{2},\frac{3}{2},3,3,3,\frac{5}{2}\Big).$$
Since the nilsoliton inner product is unique (up to isometry and scaling) it suffices to prove 
that there is no closed $G_2$ form on $\mathfrak{n}_3$ inducing such an inner product.

 Suppose that $\mathfrak{n}_3$ has  a closed $G_2$ form $\phi$ such that 
\begin{equation}\label{g2:nilsoliton-n3}
g_{\phi}= \langle \cdot, \cdot \rangle_{\mathfrak{n}_3}  = \sum_{i = 1}^7  (e^i)^2. 
\end{equation}
Thus, $g_{\phi}$ has to  satisfy
\begin{equation}\label{productorio}
\prod_{i=1}^7g_{\phi}(e_i, e_i)=1.
\end{equation}
A generic closed 3-form 
$\gamma$ on $\mathfrak{n}_3$ has the following expression
\begin{equation*}
\begin{aligned}
\gamma=&c_{123} e^{123}+c_{124} e^{124}+c_{125} e^{125}+c_{126} e^{126}+c_{127} e^{127}+c_{134} e^{134}+c_{135} e^{135}\\
&+c_{136} e^{136}+c_{137} e^{137}+c_{145} e^{145}+c_{146}e^{146}+c_{147} e^{147}+c_{156} e^{156}+c_{157} e^{157}\\
&+c_{167} e^{167}+c_{234} e^{234}+c_{235} e^{235}+c_{236} e^{236}+c_{237} e^{237}+c_{146} e^{245}+c_{246}e^{246}\\
&+c_{247} e^{247}+c_{256} e^{256}+c_{257} e^{257}+c_{267} e^{267}+c_{156} e^{345}+c_{256} e^{346}\\
&+\left(c_{257}-c_{167}\right) e^{347}+c_{356} e^{356}+c_{357}e^{357}+c_{367} e^{367},
\end{aligned}
\end{equation*}
where $c_{ijk}$ are arbitrary real numbers.
 Now, we show conditions on the coefficients $c_{ijk}$ so that $\phi = \gamma$ is a closed $G_2$ form such that
$g_{\phi}$ satisfies \eqref{g2:nilsoliton-n3}.
To this end, we apply the aforementioned result of \cite [Proposition 4.5]{Schulte} for $X=e_i$ $(1\leq i \leq 7)$ and so $\eta=e^i$  by \eqref{g2:nilsoliton-n3}.
For $X=e_1$, thus $\eta=e^1$,  we have
\begin{equation*}
\begin{aligned}
\alpha_1=&\iota_{e_1}\phi=c_{123}e^{23}+c_{124}e^{24}+c_{125}e^{25}+c_{126}e^{26}+c_{127}e^{27}+c_{134}e^{34}+c_{135}e^{35}\\
&+c_{136}e^{36}+c_{137}e^{37}+c_{145}e^{45}+c_{146}e^{46}+c_{147}e^{47}+c_{156}e^{56}+c_{157}e^{57}+c_{167}e^{67},
\end{aligned}
\end{equation*}
and
\begin{equation*}
\begin{aligned}
\beta_1=&\phi-\iota_{e_1}\phi\wedge e^1=c_{234} e^{234}+c_{235} e^{235}+c_{236} e^{236}+c_{237} e^{237}+c_{146} e^{245}+c_{246}e^{246}\\
&+c_{247} e^{247}+c_{256} e^{256}+c_{257} e^{257}+c_{267} e^{267}+c_{156} e^{345}+c_{256} e^{346}\\&
+\left(c_{257}-c_{167}\right) e^{347}+c_{356} e^{356}+c_{357}e^{357}+c_{367} e^{367},
\end{aligned}
\end{equation*}
 But, 
$
\alpha_1\wedge \beta_1=0
$
describes a system of 6 equations. Hence, after apply the result of \cite [Proposition 4.5]{Schulte} for $X=e_2, \dots, e_7$,
we obtain a system of 42 equations. 
This system and condition \eqref{g2:nilsoliton-n3}
imply that any closed $G_2$ form  on $\mathfrak{n}_3$ satisfying \eqref{productorio} is expressed as follows
\begin{equation}\label{trick-1}
\begin{aligned}
\phi =&c_{123} e^{123}+c_{145} e^{145}+c_{167} e^{167}+c_{246} e^{246}+c_{257} e^{257}\\
%%[2pt]
&+\left(c_{257}-c_{167}\right) e^{347}+c_{356} e^{356}.
\end{aligned}
\end{equation}
Because $\phi$ should be a closed $G_2$ form on $\mathfrak{n}_3$, at least for certain coefficients $c_{ijk}$, Lemma \ref{Obs1} implies that the coefficients appearing on \eqref{trick-1}
cannot vanish. In particular,
$c_{257}-c_{167}\not=0.$
 Now, denote by $G_{\phi}$ the matrix associated to the inner product on $\mathfrak{n}_3$ induced by the 3-form $\phi$ given by \eqref{trick-1}. Then, 
\eqref{g2:nilsoliton-n3} implies that
$G_{\phi}=I_7$, for some $c_{ijk}$ and then
\begin{equation} \label{sistema2}
S=G_{\phi}- I_7 =0,
\end{equation}
for those coefficients.
From now on, we denote by $S_{ij}$ the $(i,j)$ 
entry of the matrix $S$. 
One can check that the equations $S_{11} = S_{22} =S_{55} =0$ imply that
\begin{equation*}
c_{356}=\frac{1}{c_{145}c_{257}}, \quad c_{246}=-\frac{c_{145}c_{167}}{c_{257}}\quad \text{ and } \quad c_{123}=\frac{1}{c_{145}c_{167}}.
\end{equation*}
Therefore, the expression of $S_{66}$ becomes
\begin{equation*}
S_{66}=\frac{(c_{167}-c_{257})(c_{167}+c_{257})}{c_{257}^2},
\end{equation*}
and hence $c_{167}=\pm c_{257}$.
But 
we know that  $c_{167}\not=c_{257}$, and for $c_{167}=-c_{257}$, we have  
that $S_{33}=-c_{123}(c_{167}-c_{257})c_{356}$ and so 
$S\not=0$,  which is a contradiction with \eqref{sistema2}. This means that $\mathfrak{n}_3$ does not admit 
a closed $G_2$ form inducing the nilsoliton given by \eqref{g2:nilsoliton-n3}.

\medskip
To prove that $\mathfrak{n}_5$ has a nilsoliton, we consider the Lie algebra $\mathfrak{n}_5$ defined by the structure equations
$$\mathfrak{n}_5=(0,0,\sqrt{3}e^{12},0,0,2e^{13},e^{14}+\sqrt{3}e^{25}).$$
Consider the inner product $\langle \cdot, \cdot \rangle_{\mathfrak{n}_5}$ such that the basis $\{e^1,\dots , e^7 \}$ is orthonormal.
Then, its Ricci tensor satisfies
$$Ric=diag\Big(-4,-3,-\frac{1}{2},-\frac{1}{2},-\frac{3}{2},2,2\Big).$$
Actually,
$Ric=-\frac{13}{2}I_7+D,$
where $D$ is the derivation of $\mathfrak{n}_5$ given by
$$D=diag\Big(\frac{5}{2},\frac{7}{2},6,6,5,\frac{17}{2},\frac{17}{2}\Big)$$ 
and so $\langle \cdot, \cdot \rangle_{\mathfrak{n}_5} = \sum_{i = 1}^7  (e^i)^2$ is a nilsoliton inner product.

Since the nilsoliton inner product is unique (up to isometry and scaling) it  is sufficient  to prove that there is no closed $G_2$ form on $\mathfrak{n}_5$ inducing such an inner product. Suppose that $\mathfrak{n}_5$ has a closed $G_2$ form $\phi$ such that   $g_{\phi} = \langle \cdot, \cdot \rangle_{\mathfrak{n}_5}$.

A generic closed 3-form $\gamma$ on $\mathfrak{n}_5$ has the following expression
\begin{equation*}
\begin{aligned}
\gamma=&c_{123} e^{123}+c_{124} e^{124}+c_{125} e^{125}+c_{126} e^{126}+c_{127} e^{127}+c_{134} e^{134}+c_{135} e^{135}\\
&+c_{136} e^{136}+c_{137}e^{137}+c_{145} e^{145}+c_{146} e^{146}+c_{147} e^{147}+c_{156} e^{156}+c_{157} e^{157}\\
&+c_{167} e^{167}+c_{234} e^{234}+c_{235}e^{235}+c_{236} e^{236}+c_{237} e^{237}+c_{245} e^{245}+\frac{1}{2} c_{237} e^{246}\\
&+c_{247} e^{247}-\frac{1}{2} \sqrt{3} c_{137} e^{256}+ \sqrt{3} ( c_{345}- c_{147})e^{257}+c_{345} e^{345}-c_{167} e^{356}+c_{457} e^{457},
\end{aligned}
\end{equation*}
where $c_{ijk}$ are arbitrary real numbers. Now we show conditions on the coefficients $c_{ijk}$ so that $\phi=\gamma$ is a closed $G_2$ form such that  $g_{\phi} = \langle \cdot, \cdot \rangle_{\mathfrak{n}_5}$. Lemma \ref{Obs1} (applied for $X=e_7$) implies that
\begin{equation}\label{inequality}
c_{167}\,c_{237}\,c_{457}\neq 0.
\end{equation} 
Now, we denote by $G_{\phi}$ the matrix associated to the inner product on $\mathfrak{n}_5$ induced by the generic closed 3-form $\phi$. 
Then the condition $g_{\phi} = \langle \cdot, \cdot \rangle_{\mathfrak{n}_5}$ implies \eqref{sistema2} for some coefficients $c_{ijk}$.
From the equations $S_{66}=S_{77}=S_{67}=S_{37}=S_{46}=S_{33}=S_{36}=S_{47}=0$ we have that 
\begin{equation*}
\begin{aligned}
c_{237}&=\frac{2}{c_{167}^2}, \quad c_{457}=\frac{1}{2}c_{167}, \quad c_{236}=-2c_{247},   \quad c_{136}=-2c_{147},\\
  c_{345}&=0,   \qquad c_{134}=\frac{1}{2} c_{167},  \qquad c_{137}=2c_{146}, \qquad c_{234}=0.
\end{aligned}
\end{equation*}
 Therefore, $S_{44}=-\frac{3}{8}c_{167}^2c_{237}$ which by \eqref{inequality} cannot vanish and so $S \neq 0$, which is a contradiction with \eqref{sistema2}.

\medskip

Consider now the Lie algebra $\mathfrak{n}_7$ defined by the structure equations
$$\mathfrak{n}_7=\Big(0,0,0,e^{12},\frac{\sqrt{6}}{2}e^{13}, e^{14}+\frac{\sqrt{6}}{2}e^{23},\sqrt{2}e^{15}\Big).$$
Let $\langle \cdot, \cdot \rangle_{{\mathfrak{n}_7}}$ be the inner product on $\mathfrak{n}_7$ such that the basis $\{e^1,\dots,e^7\}$ is orthonormal.
Then, $\langle \cdot, \cdot \rangle_{\mathfrak{n}_7} = \sum_{i = 1}^7 (e^i)^2$ is a nilsoliton since 
$$Ric=\Big(-\frac{11}{4},-\frac{5}{4},-\frac{3}{2},0,-\frac{1}{4},\frac{5}{4},1\Big) = -4I_7+D,$$
where
$$D=diag\Big(\frac{5}{4},\frac{11}{4},\frac{5}{2},4,\frac{15}{4},\frac{21}{4},5\Big),$$
is a derivation of $\mathfrak{n}_7$.
As before, since the nilsoliton inner product is unique (up to isometry and scaling) it suffices to prove that there is no closed $G_2$ form on $\mathfrak{n}_7$ inducing such an inner product.

Suppose that $\mathfrak{n}_7$ has a closed $G_2$ form $\phi$ such that   $g_{\phi} = \langle \cdot, \cdot \rangle_{{\mathfrak{n}_7}}$. A generic closed 3-form $\gamma$ on $\mathfrak{n}_7$ has the following expression
\begin{equation*}
\begin{aligned}
\gamma=&c_{123} e^{123}+c_{124} e^{124}+c_{125} e^{125}+c_{126} e^{126}+c_{127} e^{127}+c_{134} e^{134}+c_{135} e^{135}\\
&+c_{136} e^{136}+c_{137}e^{137}+c_{145} e^{145}+c_{146} e^{146}+c_{147} e^{147}+c_{156} e^{156}+c_{157} e^{157}\\
&+c_{167} e^{167}+c_{234} e^{234}+c_{235} e^{235}+\left(\frac{\sqrt{6}}{2} c_{245}-\frac{\sqrt{6}}{2}  c_{146}\right) e^{236}+c_{237} e^{237}\\
&+c_{245} e^{245}+c_{246}e^{246}+\frac{\sqrt{2}}{2}c_{256} e^{247}+c_{256} e^{256}+\left(c_{167}+\frac{\sqrt{6}}{3} c_{347}\right) e^{257}\\
&+\left(\frac{\sqrt{6}}{2}
   c_{156}+\sqrt{2} c_{237}\right) e^{345}+\frac{\sqrt{6}}{2}c_{256} e^{346}+c_{347} e^{347}+\sqrt{2} c_{347} e^{356}+c_{357} e^{357},
\end{aligned}
\end{equation*}
where $c_{ijk}$ are arbitrary real numbers. Now, we show conditions on the coefficients $c_{ijk}$ so that $\phi=\gamma$ be a closed $G_2$ form such that  $g_{\phi} =\langle \cdot, \cdot \rangle_{\mathfrak{n}_7}$. Lemma \ref{Obs1} applied for $X=e_7$ implies that 
\begin{equation}\label{conditionn5}
c_{167} \neq 0.
\end{equation}
Now we apply the result of \cite [Proposition 4.5]{Schulte} for $X=e_i$  $(1 \leq i \leq 7)$ and so $\eta= e^i$ by \eqref{g2:nilsoliton-n3}. For $X=e_1$, we have
\begin{equation*}
\begin{aligned}
\alpha_1=&\iota_{e_1}\phi=c_{123} e^{23}+c_{124} e^{24}+c_{125} e^{25}+c_{126} e^{26}+c_{127} e^{27}+c_{134} e^{34}+c_{135} e^{35}\\
&+c_{136} e^{36}+c_{137}e^{37}+c_{145} e^{45}+c_{146} e^{46}+c_{147} e^{47}+c_{156} e^{56}+c_{157} e^{57}\\
&+c_{167} e^{67}\end{aligned}
\end{equation*}
and
\begin{equation*}
\begin{aligned}
\beta_1=&\phi-\iota_{e_1}\phi\wedge e^1=c_{234} e^{234}+c_{235} e^{235}+\left(\frac{\sqrt{6}}{2} c_{245}-\frac{\sqrt{6}}{2}  c_{146}\right) e^{236}+c_{237} e^{237}\\
&+c_{245} e^{245}+c_{246}e^{246}+\frac{\sqrt{2}}{2}c_{256} e^{247}+c_{256} e^{256}+\left(c_{167}+\frac{\sqrt{6}}{3} c_{347}\right) e^{257}\\
&+\left(\frac{\sqrt{6}}{2}
   c_{156}+\sqrt{2} c_{237}\right) e^{345}+\frac{\sqrt{6}}{2}c_{256} e^{346}+c_{347} e^{347}+\sqrt{2} c_{347} e^{356}+c_{357} e^{357}.
\end{aligned}
\end{equation*}
Therefore, 
$
\alpha_1\wedge \beta_1=0
$
describes a system of 6 equations. Hence, after apply the result of \cite [Proposition 4.5]{Schulte} for $X=e_2, \dots, e_7$,
we obtain a system of 42 equations. This system together with the fact that $c_{167} \neq 0$ and the condition $g_{\phi} = \langle \cdot, \cdot \rangle_{\mathfrak{n}_7}$  imply that any closed $G_2$ form on $\mathfrak{n}_7$ satisfying \eqref{productorio} is expressed as follows
\begin{equation}
\begin{aligned}\label{phin7}
\phi=&c_{123} e^{123}+c_{145} e^{145}+c_{167} e^{167}+c_{246} e^{246}+\left(c_{167}+\frac{\sqrt{6}}{3} c_{347}\right)e^{257}\\
&+c_{347} e^{347}+\sqrt{2} c_{347} e^{356}.
\end{aligned}
\end{equation}
Now we denote by $G_{\phi}$ the matrix associated to the inner product on $\mathfrak{n}_7$ induced by the 3-form $\phi$ given by \eqref{phin7}. Then, the condition $g_{\phi} = \langle \cdot, \cdot \rangle_{\mathfrak{n}_7}$ implies  \eqref{sistema2}
 is satisfied for some coefficients $c_{ijk}$. From equations $S_{11}= S_{33}= S_{44}=S_{66}=0$ we have 
\begin{equation*}
c_{123}=\frac{\sqrt{2}}{2c_{347}^3}, \quad c_{145}=-\sqrt{2}c_{347}, \quad c_{167}=-c_{347}, \quad \text{and} \quad c_{246}=\frac{\sqrt{2}}{2c_{347}^3}.
\end{equation*} 
Therefore $S_{55}=1$ and so $S\neq 0$ which is a contradiction with \eqref{sistema2}.

\medskip
Let $\mathfrak{n}_8$ be the Lie algebra described by the structure equations
$$\mathfrak{n}_8=(0,0,e^{12},-e^{13},-e^{23},e^{15}+e^{24},-e^{16}-e^{34}),$$ 
and let $\langle\cdot,\cdot\rangle_{\mathfrak{n}_8}$ be the inner product on $\mathfrak{n}_8$ such that $\{e^1,\dots , e^7\}$ is orthonormal. Then, $\langle\cdot,\cdot\rangle_{\mathfrak{n}_8}
=\sum_{i=1}^7  (e^i)^2$ is a nilsoliton because its Ricci tensor 
$$Ric=diag\Big(-2,-\frac{3}{2},-1,-\frac{1}{2},0,\frac{1}{2},1\Big)$$
satisfies \eqref{nilsoliton:lie-algebra}, for $\lambda=-\frac{5}{2}$ and  
$$D= diag \Big(\frac{1}{2},1,\frac{3}{2},2,\frac{5}{2},3,\frac{7}{2}\Big). 
$$
The nilsoliton inner product is unique (up to isometry and scaling) therefore it suffices to prove that there is no closed $G_2$ form on $\mathfrak{n}_8$ inducing such an inner product. A generic closed 3-form $\gamma$ on $\mathfrak{n}_8$ has the following expression
\begin{equation*}
\begin{aligned}
\gamma=&c_{123} e^{123}+c_{124} e^{124}+c_{125} e^{125}+c_{126} e^{126}+c_{1,2,7} e^{127}+c_{134} e^{134}+c_{135} e^{135}\\
&+c_{136} e^{136}+c_{137}
   e^{137}+\left(-c_{127}-c_{136}\right) e^{145}+c_{146} e^{146}+c_{147} e^{147}+c_{156} e^{156}\\
   &+c_{157} e^{157}+c_{234} e^{234}+c_{235}
   e^{235}+c_{2,3,6} e^{236}+c_{237} e^{237}+c_{236} e^{245}\\
   &+\left(c_{156}-c_{237}\right) e^{246}+c_{157} e^{247}+c_{256} e^{256}+c_{267}
   e^{267}+\left(c_{237}-2 c_{156}\right) e^{345}\\
   &+c_{157} e^{346}-c_{267} e^{357}+c_{267} e^{456},
\end{aligned}
\end{equation*}
where $c_{ijk}$ are real numbers. Now, we show conditions on the coefficients $c_{ijk}$ so that $\phi=\gamma$ is a closed $G_2$ form such that  $g_{\phi} = \langle \cdot, \cdot \rangle_{\mathfrak{n}_8}$. We apply the result previously mentioned \cite [Proposition 4.5]{Schulte} for $X=e_i$  $(1 \leq i \leq 7)$ and so $\eta= e^i$ by the condition $g_{\phi} = \langle\cdot,\cdot\rangle_{\mathfrak{n}_8}$. After solving the system of 42 equations we have that any closed $G_2$ form on $\mathfrak{n}_8$ satisfying \eqref{productorio} is expressed as follows
\begin{equation}\label{phin8}
\begin{aligned}
\phi=&c_{123} e^{123}+c_{124} e^{124}+c_{125} e^{125}+c_{126} e^{126}+c_{135} e^{135}+c_{136} e^{136}-c_{136} e^{145}\\
&+c_{234} e^{234}+c_{235}e^{235}+c_{236} e^{236}+c_{236} e^{245}+c_{256} e^{256}.
\end{aligned}
\end{equation}
Now denote by $G_{\phi}$ the matrix associated to the inner product on $\mathfrak{n}_8$ induced by the 3-form $\phi$ given by \eqref{phin8}. 
Then  $G_{\phi}=0$  obtaining a contradiction with \eqref{sistema2}.

\medskip

It only remains to study the Lie algebra $\mathfrak{n}_{11}$. 
According to Theorem \ref{classification}, 
$\mathfrak{n}_{11}$ is defined by the equations
$$\mathfrak{n}_{11}=(0,0,f^{12},0,f^{13},f^{24}+f^{23},f^{25}+f^{34}+f^{15}+f^{16}-3f^{26}).$$
We consider the new basis $\{e^j\}_{j=1}^7$ of $\mathfrak{n}_{11}^*$ with
\begin{align*}
\{&e^1=f^2,e^2=-\frac{\sqrt{3}}{3}f^1, e^3=\frac{\sqrt{39}}{39}f^3+\frac{\sqrt{39}}{78}f^4,e^4=-\frac{\sqrt{78}}{78}f^4,\\
& e^5=\frac{\sqrt{3}}{39}f^6,e^6=-\frac{1}{3}f^5,e^7=-\frac{\sqrt{3}}{1014}f^7\}.
\end{align*}
Thus, the Lie algebra $\mathfrak{n}_{11}$ can also be described by the structure equations
\begin{align*}
\mathfrak{n}_{11}=&\Big(0,0,\frac{\sqrt{13}}{13}e^{12},0,\frac{\sqrt{13}}{13}e^{13}-\frac{\sqrt{26}}{26}e^{14},\frac{\sqrt{26}}{26}e^{24}+\frac{\sqrt{13}}{13}e^{23},\\
&\frac{\sqrt{13}}{26}e^{25}+\frac{\sqrt{26}}{26}e^{34}+\frac{\sqrt{39}}{26}e^{15}+\frac{\sqrt{13}}{26}e^{16}-\frac{\sqrt{39}}{26}e^{26}\Big),
\end{align*}
Let $\langle \cdot, \cdot \rangle_{\mathfrak{n}_{11}}$ be the inner product on $\mathfrak{n}_{11}$ such that $\{e^1, \dots, e^7 \}$ is orthonormal. Then, $\langle \cdot, \cdot \rangle_{\mathfrak{n}_{11}} = \sum_{i = 1}^7 (e^i)^2$ is a nilsoliton because its Ricci tensor
$$Ric=\frac{1}{52}diag(-7,-7,-3,-3,1,1,5)$$
satisfies $Ric=-\frac{11}{52}Id+D$, where $D$ is the derivation of the Lie algebra $\mathfrak{n}_{11}$ given by
$$D=\frac{1}{13}diag(1,1,2,2,3,3,4).$$
It suffices to prove that there is no closed $G_2$ form on $\mathfrak{n}_{11}$ inducing such an inner product. Let's suppose that $\mathfrak{n}_{11}$ has a closed $G_2$ form $\phi$ such that  $g_{\phi} = \langle\cdot,\cdot\rangle_{\mathfrak{n}_{11}}$. A generic closed 3-form $\gamma$ on $\mathfrak{n}_{11}$ has the following expression
\begin{equation*}
\begin{aligned}
\gamma=&c_{123} e^{123}+c_{124} e^{124}+c_{125} e^{125}+c_{126} e^{126}+c_{127} e^{127}+c_{134} e^{134}+c_{135} e^{135}\\
&+c_{136} e^{136}+c_{137}
   e^{137}+c_{145} e^{145}+c_{146} e^{146}+c_{147} e^{147}+c_{156} e^{156}-\sqrt{\frac{3}{2}} c_{347} e^{157}\\
   &+\frac{c_{347}
   e^{167}}{\sqrt{2}}+c_{234} e^{234}+c_{235} e^{235}+c_{236} e^{236}+\left(\frac{c_{137}}{\sqrt{3}}-\frac{2 c_{156}}{\sqrt{3}}\right)
   e^{237}\\
   &+\left(\frac{c_{127}}{\sqrt{2}}-\frac{c_{136}}{\sqrt{2}}+c_{146}-\frac{c_{235}}{\sqrt{2}}\right) e^{245}+c_{246} e^{246}+c_{247}
   e^{247}+\left(\frac{c_{156}}{\sqrt{3}}-\frac{2 c_{137}}{\sqrt{3}}\right) e^{256}\\
   &+\frac{c_{347} e^{257}}{\sqrt{2}}+\sqrt{\frac{3}{2}} c_{347}
   e^{267}+\left(\frac{1}{2} c_{147}-\frac{c_{156}}{\sqrt{2}}-\frac{1}{2} \sqrt{3} c_{247}\right) e^{345}\\
   &+\left(-\sqrt{\frac{2}{3}}
   c_{137}-\frac{1}{2} \sqrt{3} c_{147}+\frac{c_{156}}{\sqrt{6}}-\frac{1}{2} c_{247}\right) e^{346}+c_{347} e^{347}+\sqrt{2} c_{347} e^{356}
\end{aligned}
\end{equation*}
where $c_{ijk}$ are arbitrary real numbers.

Now, we show conditions on the coefficients $c_{ijk}$ so that $\phi=\gamma$ is a closed $G_2$ form such that  $g_{\phi} = \langle\cdot,\cdot\rangle_{\mathfrak{n}_{11}}$. We apply the result of \cite [Proposition 4.5]{Schulte} for $X=e_i$  $(1 \leq i \leq 7)$ and so $\eta= e^i$ by the condition $g_{\phi} = \langle\cdot,\cdot\rangle_{\mathfrak{n}_{11}}$. After solving the system of 42 equations we have that any closed $G_2$ form on $\mathfrak{n}_{11}$ satisfying \eqref{productorio} is expressed as follows
\begin{equation}\label{phin11}
\begin{aligned}
\phi=&c_{123} e^{123}-c_{246} e^{145}-\sqrt{3}c_{246} e^{167}-\frac{\sqrt{6}}{2}c_{347} e^{157}+\frac{\sqrt{2}}{2}c_{347} e^{167}-\sqrt{3} c_{246}e^{245}
\\&+c_{246} e^{246}+\frac{\sqrt{2}}{2}c_{347} e^{257}+\frac{\sqrt{6}}{2}c_{347} e^{267}+ c_{347} e^{347}+\sqrt{2}c_{347}e^{356}.
\end{aligned}
\end{equation}
As before denote by $G_{\phi}$ the matrix associated to the inner product  induced by the 3-form $\phi$ given by \eqref{phin11}. Then, the condition $g_{\phi} =  \langle\cdot,\cdot\rangle_{\mathfrak{n}_{11}}$ implies  \eqref{sistema2}, for some $c_{ijk}$.
Equations $S_{66}=S_{77}=0$ imply that 
$$c_{246}=-\frac12c_{347}, \hspace{0.2cm} \text{and} \hspace{0.2cm} c_{347}=2^{-1/3}.$$ 
Therefore, $S_{44}=-\frac{1}{2}$ and so $S \neq 0$ which contradicts \eqref{sistema2}.
\end{proof}

 \begin{remark}  Note that the $4$-step nilpotent Lie algebra $\frak n_{10}$  is isomorphic  in the classification given in \cite{Fe3}  to the Lie algebra  $1.3(i)[ \lambda = 1]$ and  the existence of the nilsoliton  was shown in \cite[Example 2]{Fe1}. Since  an explicit expression of the nilsoliton  is not known, we  cannot apply  the argument used in the proof of  Proposition \ref{nilsoliton-NO-closed G2}. Thus, it remains open the question of whether the Lie algebra $\frak n_{10}$ admits a
closed $G_2$ form inducing a nilsoliton or not.
  Moreover, the explicit expression of the nilsolitons for ${\frak n}_{11}$ and ${\frak n}_{12}$ have been already determined in \cite{Fe3} (see there page 20, Remark 3.5), but our basis
 is different for the nilsoliton on the other Lie algebras.\end{remark}

\begin{theorem}\label{nilsoliton-closed G2}
Up to isomorphism, $\mathfrak{n}_2$, $\mathfrak{n}_4$, $\mathfrak{n}_6$ and $\mathfrak{n}_{12}$
are the unique $s$-step nilpotent Lie algebras $(s=2, 3)$ with a nilsoliton inner product
determined by a closed $G_2$-structure.
\end{theorem}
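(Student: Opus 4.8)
The plan is to go through the ten nonabelian nilpotent Lie algebras $\mathfrak{n}_i$ carrying a closed $G_2$-structure and, for each of them that is $2$- or $3$-step, decide whether its (unique, up to isometry and scaling) nilsoliton inner product can be realized by a closed $G_2$ form. By Theorem \ref{classification} and the correspondence with Table 1 of \cite{Fe1}, among the Lie algebras $\mathfrak{n}_i$ that are $s$-step with $s=2,3$, exactly $\mathfrak{n}_2,\mathfrak{n}_3,\mathfrak{n}_4,\mathfrak{n}_5,\mathfrak{n}_6,\mathfrak{n}_7,\mathfrak{n}_8,\mathfrak{n}_{12}$ occur (the remaining ones, $\mathfrak{n}_9,\mathfrak{n}_{10},\mathfrak{n}_{11}$, are $4$-step, and $\mathfrak{n}_9$ has no nilsoliton at all). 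Proposition \ref{nilsoliton-NO-closed G2} already shows that $\mathfrak{n}_3,\mathfrak{n}_5,\mathfrak{n}_7,\mathfrak{n}_8$ (and $\mathfrak{n}_{11}$, which is however $4$-step) do admit a nilsoliton but no closed $G_2$ form inducing it. So it remains to verify the positive direction for the four surviving candidates $\mathfrak{n}_2,\mathfrak{n}_4,\mathfrak{n}_6,\mathfrak{n}_{12}$: namely, to exhibit on each one an explicit closed $3$-form $\varphi$ of $G_2$ type whose induced metric $g_\varphi$ via \eqref{metric} is the nilsoliton metric.

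The core of the argument is therefore a construction, carried out uniformly for each of $\mathfrak{n}_2,\mathfrak{n}_4,\mathfrak{n}_6,\mathfrak{n}_{12}$, as follows. First I would write down structure equations for the Lie algebra in a convenient scaled basis $\{e^1,\dots,e^7\}$ and compute the Ricci endomorphism of the inner product making this basis orthonormal; one checks directly that $Ric = \lambda I + D$ for an explicit scalar $\lambda<0$ and an explicit diagonal derivation $D$ of $\mathfrak{n}_i$, so that $\langle\cdot,\cdot\rangle = \sum_i (e^i)^2$ is a nilsoliton — this is the same bookkeeping already displayed for $\mathfrak{n}_3,\mathfrak{n}_5,\mathfrak{n}_7,\mathfrak{n}_8$ in the proof of Proposition \ref{nilsoliton-NO-closed G2}. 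Second, I would produce on the same basis a $3$-form $\varphi$ in the standard $G_2$ model $e^{127}+e^{347}+e^{567}+e^{135}-e^{236}-e^{146}-e^{245}$ (possibly after renaming/permuting the basis vectors so that $d\varphi=0$ is compatible with the structure equations), check $d\varphi=0$ using the Chevalley–Eilenberg differential, and then compute $g_\varphi$ from \eqref{metric} and observe $g_\varphi = \sum_i (e^i)^2$. Since the nilsoliton metric is unique up to isometry and scaling, this shows $\varphi$ induces the nilsoliton on $\mathfrak{n}_i$. Combining with Proposition \ref{nilsoliton-NO-closed G2}, exactly $\mathfrak{n}_2,\mathfrak{n}_4,\mathfrak{n}_6,\mathfrak{n}_{12}$ satisfy the statement.

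The main obstacle — and the only genuinely nontrivial point — is the same one that makes the negative cases delicate: the $G_2$ form and the metric it induces are tightly coupled, so one cannot freely prescribe both. Concretely, having fixed the nilsoliton orthonormal basis, one must find a coframe in which the structure constants of $\mathfrak{n}_i$ and the seven monomials of the standard $G_2$ form are simultaneously in "good position", i.e. $d\varphi=0$ holds and $g_\varphi$ is the identity matrix. This amounts to solving, for each Lie algebra, a small nonlinear system in the rescaling parameters of the basis; the point is that for $\mathfrak{n}_2,\mathfrak{n}_4,\mathfrak{n}_6,\mathfrak{n}_{12}$ such a solution exists, whereas for $\mathfrak{n}_3,\mathfrak{n}_5,\mathfrak{n}_7,\mathfrak{n}_8$ the obstruction of Lemma \ref{Obs1} together with the normalization \eqref{productorio} forces the system to be inconsistent. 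In practice I would simply record, for each of the four algebras, the explicit closed $G_2$ form (these appear in the later sections, where long-time existence of the Laplacian flow is analyzed on $N_2,N_4,N_6,N_{12}$, so the forms $\varphi$ are in any case written down there), verify $d\varphi=0$ and $g_\varphi = \sum_i(e^i)^2$ by direct computation, and invoke uniqueness of the nilsoliton. The remaining step — that no $s$-step algebra with $s=2,3$ outside this list of eight even occurs, and that within the list the other four fail — is already supplied by Theorem \ref{classification}, the table correspondence, and Proposition \ref{nilsoliton-NO-closed G2}, so the theorem follows.
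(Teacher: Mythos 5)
Your proposal follows the paper's own proof essentially verbatim: the negative cases are exactly Proposition \ref{nilsoliton-NO-closed G2} (together with the fact that $\mathfrak{n}_9$ admits no nilsoliton at all), and for $\mathfrak{n}_2,\mathfrak{n}_4,\mathfrak{n}_6,\mathfrak{n}_{12}$ the paper likewise just exhibits the explicit closed $G_2$ forms (the same ones later used for the Laplacian flow), checks that $g_{\varphi_i}$ makes the chosen basis orthonormal, and verifies $Ric(g_{\varphi_i})=\lambda I+D$ with $D$ a derivation. The only inaccuracy is your bookkeeping of nilpotency steps ($\mathfrak{n}_8$ and $\mathfrak{n}_9$ are in fact $5$-step, not $2$- or $3$-step, and $\mathfrak{n}_{11}$ is $4$-step), but this is harmless since $\mathfrak{n}_8$ is excluded by Proposition \ref{nilsoliton-NO-closed G2} in any case and $\mathfrak{n}_9$ has no nilsoliton, so the conclusion is unaffected.
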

\begin{proof}
We will show that the Lie algebra $\mathfrak{n}_i$ $(i=2,4,6,12)$ has
a closed $G_2$ form $\varphi_i$ 
such that the Ricci tensor of the inner product $g_{\varphi_i}$
satisfies \eqref{nilsoliton:lie-algebra}, for some derivation 
$D$ of $\mathfrak{n}_i$ and some real number $\lambda$.

For $\mathfrak{n}_2$ we consider 
the closed $G_2$ form $\varphi_2$ defined by
\begin{equation}\label{def: varphi_2}
\varphi_2 =e^{147}+e^{267}+e^{357}+e^{123}+e^{156}+e^{245}-e^{346}.
\end{equation}
The inner product $g_{\varphi_2}$ given by \eqref{metric} is the one making 
orthonormal the basis $\{e^1,\dotsc, e^7\}$, and it is a nilsoliton since 
$
Ric =-2I_7+D,$  where 
$$
D = diag \left (1, \frac 32, \frac 32, 2, \frac{5}{2}, \frac{5}{2}, 2  \right )
$$
is a derivation of $\mathfrak{n}_2$.

\bigskip

On the Lie algebra $\mathfrak{n}_4$, we define the $G_2$ form $\varphi_4$ by
\begin{equation}\label{def: varphi_4}
\varphi_4 =-e^{124}-e^{456}+e^{347}+e^{135}+e^{167}+e^{257}-e^{236}.
\end{equation}
Then, $\varphi_4$ is closed, the inner product $g_{\varphi_4}$
makes the basis $\{e^1,\dotsc, e^7\}$ orthonormal and $g_{\varphi_4}$  is a nilsoliton 
since $Ric =- \frac 5 2 I_7+D$, where $D$ is the derivation of $\frak n_4$ given by
$$
D = diag \left (1, \frac 32, \frac 52, 2, 2, \frac 72, 3 \right).
$$

\bigskip

For the Lie algebra $\mathfrak{n}_6$ we consider the closed $G_2$-structure defined by the $3$-form
\begin{equation}\label{def: varphi_6}
\varphi_6 =e^{123}+ e^{145}+e^{167} + e^{257} - e^{246} +e^{347}+e^{356}.
\end{equation}
Therefore, the inner product $g_{\varphi_6}$ is such that the basis $\{e^1,\dotsc, e^7\}$ is orthonormal 
and it is a nilsoliton since $Ric =- \frac 52  I_7+D$, where $D$ is the derivation of $\frak n_6$ given by
$$
D = diag \left ( \frac 12, 2, 2, \frac 52, \frac 52, 3, 3 \right ).
$$

\bigskip
Theorem \ref{classification} implies that the Lie algebra $\mathfrak{n}_{12}$
is defined by the equations
$$\mathfrak{n}_{12}=(0,0,0,h^{12},h^{23},-h^{13},2h^{26}-2h^{34}-2h^{16}+2h^{25}).$$
We consider the basis $\{e^i\}_{i=1}^7$ of $\mathfrak{n}_{12}^*$ given by
\begin{align*}
\{&e^1=\frac{\sqrt{3}}{2}h^2, e^2=h^1-\frac12h^2, e^3=h^3, e^4=-\frac14 h^4, e^5=\frac14 h^5+\frac14h^6,\\
& e^6=-\frac{\sqrt{3}}{12}h^5+\frac{\sqrt{3}}{12}h^6, e^7=-\frac{\sqrt{3}}{48}h^7\}.
\end{align*}
Then, $\mathfrak{n}_{12}$ is defined as follows
\begin{equation}\label{n12}
\begin{aligned}
\mathfrak{n}_{12}=&\Big(0,0,0,\frac{\sqrt{3}}{6}e^{12}, -\frac{1}{4}e^{23}+\frac{\sqrt{3}}{12}e^{13}, -\frac{\sqrt{3}}{12}e^{23}-\frac{1}{4}e^{13},\\
&-\frac{\sqrt{3}}{6}e^{34}+\frac{\sqrt{3}}{12}e^{25}+\frac{1}{4}e^{26}+\frac{\sqrt{3}}{12}e^{16}-\frac{1}{4}e^{15} \Big).
\end{aligned}
\end{equation}
We define the $G_2$ form $\varphi_{12}$ by
\begin{equation}\label{varphi12}
\varphi_{12}=-e^{124}+e^{135}+e^{167}-e^{236}+e^{257}+e^{347}-e^{456}.
\end{equation}
Clearly $\varphi_{12}$ is closed. Moreover, $\varphi_{12}$ defines the inner product $g_{\varphi_{12}}$ which 
makes the basis $\{e^1, \dots, e^7\}$ orthonormal, and $g_{\varphi_{12}}$ is a nilsoliton since 
$Ric=-\frac14Id+\frac18D$, where $D$ is the derivation of $\mathfrak{n}_{12}$ given by
$$D=diag(1,1,1,2,2,2,3).$$
\end{proof}
\end{section}

%%%%%%%%%%%%%%%%%%%%%%%%%%%%%%%%%%%%%%%%%%%%%%%%%%%%%%%%%%%%%%%%%%%%%%%%%%%%%%%%%%%%%%
%4. LAPLACIAN FLOW%%%%%%%%%%%%%%%%%%%%%%%%%%%%%%%%%%%%%%%%%%%%%%%%%%%%%%%%%%%%%%%%%%%%
%%%%%%%%%%%%%%%%%%%%%%%%%%%%%%%%%%%%%%%%%%%%%%%%%%%%%%%%%%%%%%%%%%%%%%%%%%%%%%%%%%%%%%

\begin{section}{Laplacian flow}\label{sectLaplacian}
Let us consider the nilpotent Lie algebra $\mathfrak{n}_i$
$(i=2,4,6)$ defined in Theorem \ref{classification}, and the Lie algebra $\mathfrak{n}_{12}$ defined by 
 \eqref{n12}. Let $N_i$ be the
simply connected nilpotent Lie group 
with Lie algebra $\mathfrak{n}_i$, and let
$\varphi_i$ be the closed $G_2$ form on $N_i$ $(i=2,4,6,12)$
given by \eqref{def: varphi_2}, 
\eqref{def: varphi_4}, \eqref{def: varphi_6} and \eqref{varphi12}, for $i=2, 4,6$ and $12$, respectively.  

The purpose of this section is to prove long time existence and
uniqueness of solution for the Laplacian flow of $\varphi_i$ on $N_i$,
and  that   the underlying metrics  $g(t)$ of this solution converge smoothly, up to pull-back by time-dependent diffeomorphisms,  to a flat metric, uniformly on compact sets in $N_i$, as $t$ goes to infinity.

Let $M$ be a $7$-dimensional manifold with an arbitrary $G_2$ form $\varphi$.
The {\em Laplacian flow} of $\varphi$ is defined to be
\begin{equation*}\left\{
\begin{aligned}
&\frac{d}{dt} \varphi(t)=\Delta_{t} \varphi(t), \\
&\varphi(0)=\varphi,
\end{aligned}
\right.
\end{equation*}
where $\Delta_t$ is the Hodge Laplacian of the 
metric $g_t$ determined by the $G_2$ form $\varphi(t)$.

For the different types of $G_2$-structures the behavior of the solution of the 
Laplacian flow is very different. For example, the stable solutions of the Laplacian 
flow are given by the $G_2$ manifolds $(M, \varphi)$ such that $Hol(M)\subseteq G_2$.

The study of the Laplacian flow of a 
closed $G_2$ form $\varphi$ on a manifold $M$
consists to study  long time existence, convergence and formation of singularities  for the system
of differential equations
\begin{equation} \label{eq-Lap-flow:2}\left\{
\begin{aligned}
&\frac{d}{dt} \varphi(t)=\Delta_{t} \varphi(t), \\
&d\varphi(t)=0,\\
&\varphi(0)=\varphi.
\end{aligned}
\right.
\end{equation}

In the case of closed $G_2$-structures on compact manifolds, Bryant and Xu \cite{Br-Xu}
gave a result of short time existence and uniqueness of solution.

\begin{theorem}\cite{Br-Xu}
If $M$ is compact, then \eqref{eq-Lap-flow:2}
has a unique solution for a short time $0\leq t < \epsilon$, with $\epsilon$ 
depending on $\varphi=\varphi(0)$.
\end{theorem}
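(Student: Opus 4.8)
\medskip

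The plan is to use DeTurck's trick, exactly as for the Ricci flow: the system \eqref{eq-Lap-flow:2} fails to be parabolic only because it is invariant under $\mathrm{Diff}(M)$, so I would break the gauge, solve a genuinely parabolic equation by standard theory on the compact manifold $M$, and then transport the solution back by a time-dependent family of diffeomorphisms.

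First I would record the structure of the operator $\varphi\mapsto\Delta_\varphi\varphi$. Since the metric $g_\varphi$, the volume form and hence the Hodge star $*_\varphi$ all depend pointwise (algebraically) on $\varphi$, and since $d\varphi=0$ makes the Hodge Laplacian reduce to $\Delta_\varphi\varphi=d\,d^*\varphi$ with $d^*$ taken with respect to $g_\varphi$, this is a second-order quasilinear operator; moreover closedness is preserved and $[\varphi(t)]$ is constant, so one may look for $\varphi(t)=\varphi+d\xi(t)$. The decisive point (Bryant's computation) is that the induced metrics evolve by
$$\frac{\partial}{\partial t}\,g_{\varphi(t)}\;=\;-2\,\mathrm{Ric}\bigl(g_{\varphi(t)}\bigr)\;+\;(\text{terms of lower order, involving only }\varphi\text{ and its first derivatives}),$$
so that the principal symbol of the linearisation of $\varphi\mapsto\Delta_\varphi\varphi$, on the metric-changing summand $\Lambda^3_1\oplus\Lambda^3_{27}$ of the $G_2$-decomposition $\Lambda^3T^*M=\Lambda^3_1\oplus\Lambda^3_7\oplus\Lambda^3_{27}$, agrees with that of $g\mapsto-2\,\mathrm{Ric}(g)$ — weakly elliptic, with kernel precisely the infinitesimal diffeomorphism directions — while on the remaining $\Lambda^3_7$ summand one checks directly, again following Bryant, that the symbol is nonnegative.

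Next I would run the DeTurck argument: fix a background metric $\bar g$ with Levi-Civita connection $\bar\nabla$, put $W(\varphi)^k=g_\varphi^{ij}\bigl(\Gamma(g_\varphi)^k_{ij}-\bar\Gamma^k_{ij}\bigr)$, and consider the modified flow
$$\left\{\ \begin{aligned}&\tfrac{\partial}{\partial t}\varphi(t)\;=\;\Delta_{\varphi(t)}\varphi(t)+\mathcal{L}_{W(\varphi(t))}\varphi(t),\\&\varphi(0)=\varphi.\end{aligned}\right.$$
Here $\mathcal{L}_{W(\varphi)}\varphi=d\bigl(\iota_{W(\varphi)}\varphi\bigr)$ is again second order in $\varphi$, and the classical DeTurck computation shows it precisely cancels the degeneracy coming from the diffeomorphism gauge; combined with the nonnegativity on $\Lambda^3_7$, the principal symbol of the modified operator is positive definite on all of $\Lambda^3T^*M$. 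Hence the modified flow is a strictly parabolic quasilinear system on the compact manifold $M$, so by the standard theory of quasilinear parabolic equations it has a unique solution $\varphi(t)$ on a maximal interval $[0,\epsilon)$ with $\epsilon$ depending on $\varphi(0)$, and $\varphi(t)$ stays a (positive) $G_2$-form for $t$ small by openness.

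Finally I would undo the gauge and settle uniqueness. Let $\psi_t$ be the flow of the time-dependent vector field $-W(\varphi(t))$ with $\psi_0=\mathrm{id}$, which exists for short time since $M$ is compact; then $\widetilde\varphi(t):=\psi_t^*\varphi(t)$ is closed, satisfies $\widetilde\varphi(0)=\varphi$, and, using $\tfrac{d}{dt}\psi_t^*=\psi_t^*\mathcal{L}_{-W}$ together with the naturality of $\Delta$ under diffeomorphisms, solves $\tfrac{d}{dt}\widetilde\varphi(t)=\Delta_{\widetilde\varphi(t)}\widetilde\varphi(t)$, i.e. \eqref{eq-Lap-flow:2}. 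For uniqueness, given two solutions of \eqref{eq-Lap-flow:2} with the same initial datum one produces, via a harmonic-map-type heat flow relative to the time-dependent metrics and to $\bar g$, gauge transformations conjugating each into a solution of the modified parabolic flow with the same initial condition; these coincide by parabolic uniqueness, and then so do the diffeomorphisms, hence the original solutions agree. The main obstacle is the parabolicity step: it requires Bryant's precise identification of the second-order part of the variation of $\Delta_\varphi\varphi$ — in particular controlling the contribution of the $\varphi$-dependence of $*_\varphi$ — and verifying positivity of the symbol of the DeTurck-modified operator separately on the $\Lambda^3_7$ and $\Lambda^3_1\oplus\Lambda^3_{27}$ summands.
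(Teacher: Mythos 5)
First, note that the paper itself contains no proof of this statement: it is quoted directly from the unpublished preprint \cite{Br-Xu}, so your proposal can only be measured against the actual Bryant--Xu argument. Your skeleton (DeTurck gauge-fixing, solve a modified flow, pull back by the flow of $-W(\varphi(t))$, prove uniqueness via a harmonic-map-type gauge) is indeed the right one and is the route Bryant and Xu take.

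However, there is a genuine gap at the decisive step. You claim that after adding $\mathcal{L}_{W(\varphi)}\varphi$ the principal symbol of the modified operator becomes positive definite on all of $\Lambda^3T^*M$, so that standard quasilinear parabolic theory applies. This does not follow from what you wrote --- a nonnegative symbol on $\Lambda^3_7$ together with cancellation of the $7$-dimensional family of infinitesimal diffeomorphism directions cannot upgrade to strict positivity on a strictly larger degenerate subspace --- and it is in fact false. Since $\Delta_\varphi\varphi=dd^*\varphi$ for closed $\varphi$, the relevant symbol is $\chi\mapsto -\xi\wedge\iota_{\xi^{\#}}\chi$, which annihilates the entire $20$-dimensional subspace $\{\chi\in\Lambda^3:\iota_{\xi^{\#}}\chi=0\}$, whereas the DeTurck term can only repair the $7$-dimensional subspace coming from $\mathrm{Diff}(M)$. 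The modified flow is therefore parabolic only in the direction of closed forms (at the symbol level, forms in the image of $\xi\wedge\cdot$), not on all of $\Lambda^3T^*M$. This is precisely why Bryant and Xu do not invoke standard quasilinear parabolic existence theory but instead work within Hamilton's Nash--Moser inverse function theorem framework for systems that are parabolic only along an integrable subbundle (here, the closed $3$-forms in the fixed cohomology class). Without that machinery, or an alternative reduction such as writing $\varphi(t)=\varphi_0+d\eta(t)$ and extracting a genuinely parabolic equation for a suitably gauge-fixed $\eta$, your existence argument does not close; the uniqueness paragraph, which rests on parabolic uniqueness for the modified flow, inherits the same defect.
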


In the following theorem we determine a global solution of
the Laplacian flow of the closed $G_2$ form $\varphi_2$ on $N_2$.

\begin{theorem}\label{Lap-flow: N2}
The family of closed $G_2$ forms $\varphi_2(t)$ on $N_2$ given by
\begin{equation}\label{solution:N2}
\varphi_2(t)=e^{147}+e^{267}+e^{357}+f(t)^{3}e^{123}+e^{156}+e^{245}-e^{346}, \qquad t\in \left (-\frac{3}{10},+ \infty \right),
\end{equation}
is the solution of the Laplacian flow \eqref{eq-Lap-flow:2} of $\varphi_2$, where $f=f(t)$ is the function
$$
f(t)=\Big(\frac{10}{3} t +1\Big)^{\frac{1}{5}}.
$$
Moreover, 
the underlying metrics  $g(t)$ of this solution converge smoothly, up to pull-back by time-dependent diffeomorphisms,  to a flat metric, uniformly on compact sets in $N_2$, as $t$ goes to infinity.
\end{theorem}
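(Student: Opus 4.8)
The plan is to reduce the Laplacian flow to an ODE by exploiting left-invariance, then to integrate that ODE explicitly, and finally to analyze the limiting behavior of the associated metrics via the bracket flow on $\R^7$. First I would note that since $\varphi_2$ is left invariant and the Laplacian flow is diffeomorphism-equivariant, the solution $\varphi_2(t)$ must remain left invariant for all $t$ in its interval of existence; hence it suffices to solve the flow at the level of the Lie algebra $\mathfrak{n}_2$. Making the ansatz that only the coefficient of $e^{123}$ evolves --- which is natural because $e^{123}$ is the unique "vertical" summand of $\varphi_2$ distinguished by the structure equations $de^5=e^{12}$, $de^6=e^{13}$ --- I would write $\varphi_2(t) = e^{147}+e^{267}+e^{357}+h(t)\,e^{123}+e^{156}+e^{245}-e^{346}$ with $h(0)=1$. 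Then I would compute the induced metric $g_{\varphi_2(t)}$ from \eqref{metric}: it is diagonal in the basis $\{e^i\}$, with the first few diagonal entries rescaled by powers of $h$ and the volume form picking up a factor of a power of $h$; one finds that with the substitution $h=f^3$, $f=(\tfrac{10}{3}t+1)^{1/5}$, the dual metric coefficients come out as stated. Computing the Hodge star $*_t$ of $\varphi_2(t)$, then $d*_t\varphi_2(t)$, then $\Delta_t\varphi_2(t) = d*_t d*_t\varphi_2(t)$ (using $d\varphi_2(t)=0$), I would check that $\Delta_t\varphi_2(t)$ is again a multiple of $e^{123}$, reducing \eqref{eq-Lap-flow:2} to the scalar ODE $h'(t) = c\, h(t)^{-1/3}$ (or the equivalent equation in $f$), whose solution with $h(0)=1$ is exactly $f(t)^3$ with the displayed $f$; the interval of existence is $(-\tfrac{3}{10},+\infty)$ because $\tfrac{10}{3}t+1>0$ there. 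Uniqueness follows from uniqueness for this ODE together with the fact that any solution of the Laplacian flow through $\varphi_2$ is left invariant and cohomologous to $\varphi_2$ (the de Rham class $[\varphi_2(t)]$ is constant, which on the Lie algebra pins down all the other coefficients).

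For the convergence statement I would follow Lauret's bracket-flow formalism as in \cite{La4}. The idea is to transfer the time dependence from the $3$-form (equivalently, from the metric $g(t)$) onto the Lie bracket: instead of a fixed vector space $\R^7$ with a varying inner product, one works with a fixed inner product and a varying nilpotent bracket $\mu_t\in\mathcal N\subset(\Lambda^2\R^7)^*\otimes\R^7$, the two pictures being related by the time-dependent linear isomorphism (gauge) that diagonalizes $g(t)$ with respect to the fixed basis. Under this change of variables the Laplacian flow becomes an ODE for $\mu_t$, and from the explicit solution above one reads off that the nonzero structure constants of $\mu_t$ (those encoding $de^5=e^{12}$, $de^6=e^{13}$, suitably rescaled) are multiplied by negative powers of $f(t)=(\tfrac{10}{3}t+1)^{1/5}$, hence tend to $0$ as $t\to+\infty$. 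Thus $\mu_t\to\mu_\infty=0$ in $\mathcal N$ with the usual vector space topology, and $\mu_\infty$ is the abelian bracket, whose associated left invariant metric on $\R^7$ is flat. By \cite[Proposition 2.1]{La4}, convergence $\mu_t\to\mu_\infty$ in $\mathcal N$ is equivalent to smooth convergence, uniformly on compact sets in $\R^7\cong N_2$, of the metrics $g(t)$ --- up to pull-back by the time-dependent diffeomorphisms implementing the gauge --- to the flat metric $g_\infty$. This yields the final assertion.

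I expect the main obstacle to be the explicit Hodge-theoretic computation: verifying that $\Delta_t\varphi_2(t)$ is proportional to $e^{123}$ with precisely the coefficient that produces the stated $f(t)$ requires carefully computing $*_t$ on a non-standard (though diagonal) metric and then two exterior derivatives on $\mathfrak{n}_2^*$. Everything downstream --- solving the scalar ODE, identifying the interval of existence, and extracting the bracket-flow limit --- is then routine; the one conceptual point to state carefully is that the scaling of the bracket is by a strictly negative power of $f$, so that $\mu_t\to 0$ rather than blowing up, which is what forces the flat limit. It is also worth remarking that the same scheme applies verbatim to $\varphi_4$, $\varphi_6$ and $\varphi_{12}$ on $N_4$, $N_6$, $N_{12}$, the only change being the exponents appearing in the induced metric and hence in the resulting ODE.
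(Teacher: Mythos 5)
Your overall strategy coincides with the paper's: work with an invariant diagonal ansatz, compute $*_t$ and $\Delta_t\varphi_2(t)$, observe that the Laplacian is a multiple of $e^{123}$, integrate the resulting scalar ODE, and treat the convergence via the bracket flow. The only structural difference is that the paper starts from the general diagonal ansatz $x^i=f_i(t)e^i$ with seven unknown functions, derives $f_1=f_2=f_3=f$ and $f_4=\dots=f_7=f^{-1/2}$ from the constancy of the other six coefficients of $\varphi_2(t)$, and only then reduces to one scalar equation, whereas you posit the one-parameter family directly and verify it; both are legitimate, and the paper's version has the small advantage of establishing uniqueness within that ansatz class. For the convergence statement, your bracket-flow argument via \cite[Proposition 2.1]{La4} matches the strategy announced in the introduction and in the remark following the theorem; the proof body itself instead computes the curvature tensor of $g(t)$ explicitly and checks $R(t)\to 0$.

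Two concrete points need repair. First, your scalar ODE has the wrong exponent and is inconsistent with the solution you claim for it: with $h=f^3$ the induced metric satisfies $g(e_i,e_i)=h^{2/3}$ for $i\le 3$ and $g(e_i,e_i)=h^{-1/3}$ for $i\ge 4$, and the Hodge computation gives $\Delta_t\varphi_2(t)=2f_1^{-2}e^{123}=2h^{-2/3}e^{123}$, so the equation is $h'=2h^{-2/3}$ (equivalently $f^4f'=\tfrac23$), whose solution is $h=(\tfrac{10}{3}t+1)^{3/5}$ as stated. The equation $h'=ch^{-1/3}$ that you wrote integrates to $h=(\tfrac{4c}{3}t+1)^{3/4}$, which is not of the form $(\tfrac{10}{3}t+1)^{3/5}$ for any $c$. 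Second, your uniqueness argument via constancy of the de Rham class is vacuous here: $N_2$ is a simply connected nilpotent Lie group, hence diffeomorphic to $\R^7$, so $H^3_{dR}(N_2)=0$ and the class $[\varphi_2(t)]$ carries no information (the paper does not prove uniqueness among all solutions either; it only pins down the solution within the invariant diagonal ansatz). Neither issue affects the validity of the construction itself.
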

\begin{proof}
Let $f_i=f_i(t)$ $(i=1,\dots,7)$ be some differentiable real functions depending on a parameter 
$t\in I\subset{\mathbb{R}}$ such that $f_i(0)=1$ and $f_i(t)\not=0$, for any $t\in I$, where $I$ is a real open interval. 
For each $t\in I$, we consider the basis $\{x^1,\dots,x^7\}$ of left invariant $1$-forms
on $N_2$ defined by
$$
x^i=x^i(t)=f_{i}(t)e^i, \quad 1\leq i\leq 7.
$$
From now on we write $f_{ij}=f_{ij}(t)=f_{i}(t)f_{j}(t)$, $f_{ijk}=f_{ijk}(t)=f_{i}(t)f_{j}(t)f_{k}(t)$, and so forth.
Then, the structure equations of $N_2$ with respect to this basis are
\begin{equation} \label{new-eq:N2}
dx^i=0, \quad i=1,2,3,4,7,  \qquad dx^5=\frac{f_{5}}{f_{12}} x^{12},   \qquad dx^6=\frac{f_{6}}{f_{13}} x^{13}.
\end{equation}
Now, for any $t\in I$, we consider the $G_2$ form $\varphi_{2}(t)$ on $N_2$ given by
\begin{equation}\label{sol-N2-1}
\begin{aligned}
\varphi_{2}(t)&=x^{147}+x^{267}+x^{357}+x^{123}+x^{156}+x^{245}-x^{346} \\
&=f_{147}e^{147}+f_{267}e^{267}+f_{357}e^{357}+f_{123}e^{123}+f_{156}e^{156}
+f_{245}e^{245}-f_{346}e^{346}.
\end{aligned}
\end{equation}

Note that $\varphi_{2}(0)=\varphi_{2}$ and, for any $t$, the $3$-form $\varphi_{2}(t)$ 
on $N_2$ determines the metric $g_{t}$  
such that the basis $\{x_{i}={\frac{1}{f_i}}e_{i}; \ i=1,\dots,7\}$ of $\mathfrak{n}_2$ is orthonormal.
So, $ g (t) (e_i,e_i)={f_i}^2$. 

Using \eqref{new-eq:N2}, one can check that $d\varphi_{2}(t)=0$ if and only if
\begin{equation} \label{eq-Lap-flow:4}
f_{26}(t)=f_{35}(t),
\end{equation}
for any $t$. Assuming $f_{i}(0)=1$ and \eqref{eq-Lap-flow:4},
to solve the flow \eqref{eq-Lap-flow:2} of $\varphi_2$, 
we need to determine the functions $f_i$ and the interval $I$ so 
that $\frac{d}{dt}\varphi_{2}(t)=\Delta_{t}\varphi_{2}(t)$, for $t\in I$.
Using \eqref{sol-N2-1} we have
\begin{equation}\label{eq-Lap-flow:5}
\begin{aligned} 
\frac{d}{dt}\varphi_{2}(t)=&\Big(f_{147}\Big)' e^{147}+\Big(f_{267}\Big)' e^{267}+\Big(f_{357}\Big)' e^{357}+\Big(f_{123}\Big)' e^{123} \\
&+\Big(f_{156}\Big)' e^{156}+\Big(f_{245}\Big)' e^{245}-\Big(f_{346}\Big)' e^{346}.
\end{aligned}
\end{equation}
Now, we calculate $\Delta_{t}\varphi_{2}(t)=-d *_{t}d *_{t}\varphi_{2}(t)$. On the one hand, we have
\begin{equation} \label{eq-ast-N2}
\begin{aligned} 
*_{t}\varphi_{2}(t)=x^{2356}-x^{1345}-x^{1246}+x^{4567}+x^{2347}-x^{1367}+x^{1257}.
\end{aligned}
\end{equation}
So, $x^{4567}$ is the unique nonclosed summand in $*_{t}\varphi_{2}(t)$. Then, taking into account
\eqref{eq-Lap-flow:4}, we obtain
$$
d(*_{t}d*_{t}\varphi_{2}(t))={\frac{f_6}{f_{13}}} \left (-{\frac{f_6}{f_{13}}}x^{123}-{\frac{f_5}{f_{12}}}x^{123} \right )=-2\Big({\frac{f_6}{f_{13}}}\Big)^2 x^{123}.
$$
Therefore, in terms of the forms $e^{ijk}$, the expression of $-d(*_{t}d*_{t}\varphi_{2}(t))$ is
\begin{equation} \label{eq-Lap-flow:6}
-d(*_{t}d*_{t}\varphi_{2}(t))=2f_{123}\Big({\frac{f_6}{f_{13}}}\Big)^2 e^{123}=2\Big({\frac{f_{2}(f_6)^2}{f_{13}}}\Big)e^{123}.
\end{equation}
Comparing \eqref{eq-Lap-flow:5} and \eqref{eq-Lap-flow:6} we see that, in particular, 
$
f_{156}(t)=1,
$
for any $t\in I$. 
Then, using \eqref{eq-Lap-flow:4}, we have
$$
\frac{f_{2}(f_6)^2}{f_{13}}
=\frac{1}{(f_1)^2}.
$$
This equality and \eqref{eq-Lap-flow:6} imply that $-d(*_{t}d*_{t}\varphi_{2}(t))$ can be expressed as follows
\begin{equation} \label{eq-Lap-flow:7}
-d(*_{t}d*_{t}\varphi_{2}(t))=2\frac{1}{(f_1)^2}e^{123}.
\end{equation} 
Then, from \eqref{eq-Lap-flow:5} and \eqref{eq-Lap-flow:7} we have 
that $\frac{d}{dt}\varphi_{2}(t)=\Delta_{t}\varphi_{2}(t)$ if and only if the functions $f_{i}(t)$ satisfy 
the following system of differential equations
\begin{equation} \label{eq-Lap-flow:8}
\begin{aligned} 
&\Big(f_{147}\Big)'=\Big(f_{267}\Big)'=\Big(f_{357}\Big)'=\Big(f_{156}\Big)'=\Big(f_{245}\Big)'=\Big(f_{346}\Big)'=0, \\%%\quad 
&\Big(f_{123}\Big)'=2\frac{1}{(f_1)^2}.
\end{aligned} 
\end{equation}
Because $\varphi_{2}(0)=\varphi_2$, the equations in the first line
of \eqref{eq-Lap-flow:8} imply
\begin{equation} \label{eq-Lap-flow:9}
f_{147}(t)=f_{267}(t)=f_{357}(t)=f_{156}(t)=f_{245}(t)=f_{346}(t)=1,
\end{equation}
for any $t\in I$. 
From the equations \eqref{eq-Lap-flow:9} 
we obtain
$$
f_{1}^2=f_{2}^2=f_{3}^2.
$$
Let us consider $f=f_1=f_2=f_3$. 
Using again \eqref{eq-Lap-flow:9}
we have
$$
f_{i}(t)=\Big(f(t)\Big)^{-\frac{1}{2}}, \quad i=4,5,6,7.
$$
Now, the last equation of \eqref{eq-Lap-flow:8} implies that
$
f^4 f'=\frac{2}{3}.
$
Integrating this equation, we obtain
$$
f^5={\frac{10}{3}}t+B, \quad B=constant.
$$
But $\varphi_{2}(0)=\varphi_2$ implies $f^3(0)=f_{123}(0)=1$, that is, $B=1$. Hence,
$$
f(t)=\Big({\frac{10}{3}}t+1\Big)^{\frac{1}{5}},
$$
and so the one parameter family of $3$-forms $\{\varphi_{2}(t)\}$ given
by \eqref{solution:N2} is the solution of the Laplacian flow of 
$\varphi_{2}$ on $N_2$, and it is defined for every $t \in (- \frac{3}{10}, + \infty)$.

To complete the proof, we study the behavior of  the underlying metric $g(t)$  of such a solution in the limit for $t \to + \infty$.
Indeed, if we think of the Laplacian flow as a one parameter family of $G_2$ manifolds with a closed
$G_2$-structure,
it can  be checked that, in the limit, the resulting manifold has vanishing curvature. For every $t\in \left (-\frac{3}{10},+ \infty \right)$, denote by
$g(t)$  the metric on $N_{2}$ induced by the $G_2$ form $\varphi_{2}(t)$ given by \eqref{solution:N2}. Then,
\begin{equation*}
\begin{aligned}
g(t)&=\Big(\frac{10}{3}t+1\Big)^{2/5} (e^1)^2 +\Big(\frac{10}{3}t+1\Big)^{2/5} (e^2)^2 +\Big(\frac{10}{3}t+1\Big)^{2/5} (e^3)^2
\\&+\Big(\frac{10}{3}t+1\Big)^{-1/5} (e^4)^2 +\Big(\frac{10}{3}t+1\Big)^{-1/5} (e^5)^2 +\Big(\frac{10}{3}t+1\Big)^{-1/5} (e^6)^2
\\&+\Big(\frac{10}{3}t+1\Big)^{-1/5} (e^7)^2.
\end{aligned}
\end{equation*}
Concretely, taking into account the symmetry properties of the Riemannian curvature  $R(t)$ we obtain
\begin{align*}
R_{1212}&=R_{1313}=-\frac{3}{4(1+\frac{10}{3}t)},\\
R_{1515}&=R_{1616}=R_{3636}=R_{2525}=\frac{1}{4(1+\frac{10}{3}t)},\\
R_{2356}&=-\frac{1}{4(1+\frac{10}{3}t)}, \quad R_{ijkl} =0 \quad \text{otherwise},
\end{align*}
where $R_{ijkl}=R(t) (e_i,e_j,e_k,e_l)$. Therefore, $\lim_{t\rightarrow +\infty}R (t)=0$.
\end{proof}

\begin{remark}
Note that, for every $t \in \left (-\frac{3}{10},+ \infty \right)$, the metric $g(t)$ is a nilsoliton on the Lie algebra $\frak n_{2}$ of $N_{2}$.
In fact,  with respect to the orthonormal basis $(x_1(t), \ldots, x_7 (t))$, we have \begin{equation*} Ric(g(t)) = - \frac{6}{(3+10t)} Id +  \frac{3}{(3+10t)} diag  \left (1, \frac{3}{2},\frac{3}{2}, 2, \frac{5}{2}, \frac{5}{2}, 2 \right)=\frac{3}{(3+10t)}Ric(g(0))\end{equation*} with  $\frac{3}{(3+10t)} diag  \left (1, \frac{3}{2}, \frac{3}{2}, 2, \frac{5}{2}, \frac{5}{2}, 2 \right)$ a derivation of $\frak n_2$ for every $t$.
\end{remark}

\begin{remark}
The limit  can be also computed  fixing  the  $G_2$-structure  and  changing  the  Lie  bracket  as in \cite{La4}.  We  evolve  the Lie brackets $\mu(t)$ 
instead of the   $3$-form defining  the $G_2$-structure and  we can show that the corresponding bracket flow has a solution for every $t$.  Indeed,  if we fix  on $\R^7$ the $3$-form $x^{147}+x^{267}+x^{357}+x^{123}+x^{156}+x^{245}-x^{346}$,    the basis $(x_1(t), \ldots, x_7 (t))$      defines  for every positive  $t$ a  nilpotent Lie algebra with bracket $\mu(t)$ such that $\mu(0)$ is the Lie bracket of $\frak n_2$. Moreover, the solution converges to the null bracket corresponding to the abelian Lie algebra.  \end{remark}

In order to prove long time existence of solution for the Laplacian 
flow \eqref{eq-Lap-flow:2} of the closed $G_2$ form $\varphi_4$ on $N_4$, we need 
to study the (nonlinear) system of ordinary differential equations
\begin{equation}\label{eqn:1-N4}
\begin{cases}
u'=+\dfrac 23\,\dfrac{2-u^3}{u^3v^3},\\ v'= - \dfrac 23\,\dfrac{1-2u^3}{u^4v^2},\end{cases}
\end{equation} 
with initial conditions
\begin{equation}\label{eqn:2-N4}
u(0)=v(0)=1,
\end{equation}
where $u=u(t)$ and $v=v(t)$ are differentiable real functions such that are both positive.
Note that 
the first equation of \eqref{eqn:1-N4} implies that $u'>0$ since $u(0)=1$, 
$u=u(t)>0$ and $v=v(t) >0$. 
Moreover, we note also that the functions at the second member of \eqref{eqn:1-N4} are $C^\infty$ in 
the domain
\begin{equation*}
\Omega=\bigl\{(u,v)\in\mathbb R^2\mid 0<u<2^{1/3},\, v>0\bigr\},
\end{equation*}
in the phase plane. Then, for every point $(u_0,v_0)\in \Omega$, there exists a unique maximal 
solution $(u,v)$, which has $(u_0,v_0)$ as initial condition, 
and with existence domain a certain open interval $I$ such that either 
$$
\lim_{t\to \inf I}\left(u(t)^2+v(t)^2\right)=+\infty,
$$
or
$$
\lim_{t\to \inf I}\bigl(u(t),v(t)\bigr)\in \partial \Omega,
$$
and either 
$$
\lim_{t\to \sup I}\left(u(t)^2+v(t)^2\right)=+\infty,
$$
or 
$$
\lim_{t\to \sup I}\bigl(u(t),v(t)\bigr)\in \partial \Omega,
$$
where $\partial \Omega$ denotes the boundary of $\Omega$.

\begin{proposition}\label{system:N4}
The maximal solution $\bigl(u(t),v(t)\bigr)$ of \eqref{eqn:1-N4}, satisfying the initial 
conditions \eqref{eqn:2-N4}, belongs to the trajectory of equation 
\begin{equation}\label{eqn:3-N4}
v=\frac 1{\sqrt{u(2-u^3)}}.
\end{equation}
\end{proposition}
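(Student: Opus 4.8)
The plan is to exhibit a first integral of the system \eqref{eqn:1-N4} and to check that the level set through the initial point $(1,1)$ is exactly the curve \eqref{eqn:3-N4}. Concretely, I would set
$F(u,v)=u(2-u^3)v^2=(2u-u^4)v^2$
and compute the derivative of $t\mapsto F\bigl(u(t),v(t)\bigr)$ along an arbitrary solution of \eqref{eqn:1-N4}. By the chain rule,
$\frac{d}{dt}F=(2-4u^3)v^2\,u'+2v(2u-u^4)\,v'$.
Substituting the right-hand sides of \eqref{eqn:1-N4} and using $2u-u^4=u(2-u^3)$, the first summand becomes $\frac{4}{3}\frac{(1-2u^3)(2-u^3)}{u^3v}$ and the second becomes $-\frac{4}{3}\frac{(1-2u^3)(2-u^3)}{u^3v}$, so they cancel and $\frac{d}{dt}F\equiv 0$ on the maximal interval $I$.

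Hence $F\bigl(u(t),v(t)\bigr)$ is constant on $I$, and by the initial conditions \eqref{eqn:2-N4} its value is $F(1,1)=(2-1)\cdot 1=1$; that is, $u(t)\,(2-u(t)^3)\,v(t)^2=1$ for all $t\in I$. To extract \eqref{eqn:3-N4} from this, I would invoke the description of the domain: on $\Omega$ one has $0<u<2^{1/3}$, so $u(2-u^3)>0$, and since also $v>0$ the relation $u(2-u^3)v^2=1$ is uniquely solvable for $v$, giving $v=\frac{1}{\sqrt{u(2-u^3)}}$, which is precisely \eqref{eqn:3-N4}. (An equivalent route avoids guessing $F$ a priori: along the trajectory $\frac{dv}{du}=\frac{v'}{u'}=-\frac{(1-2u^3)v}{u(2-u^3)}$; since $\frac{2u^3-1}{u(2-u^3)}=-\frac{1}{2}\frac{d}{du}\log\bigl(u(2-u^3)\bigr)$, integrating this separable equation with $v=1$ at $u=1$ yields the same formula.)

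The only genuine obstacle is recognizing the conserved quantity $F$; once it is in hand the proof reduces to a one-line verification. Everything concerning existence, uniqueness and maximality of the solution, together with the dichotomy at $\partial\Omega$, has already been established in the discussion preceding the statement, so no further analytic input is needed, and the identity $u(2-u^3)v^2=1$ automatically pins down the whole maximal trajectory.
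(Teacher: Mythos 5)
Your proof is correct and is in substance the same as the paper's: the paper obtains \eqref{eqn:3-N4} by forming $\frac{dv}{du}=-\frac{v(1-2u^3)}{u(2-u^3)}$ and integrating the separable equation, which is exactly the statement that your $F(u,v)=u(2-u^3)v^2$ is a first integral with value $F(1,1)=1$ (a route you yourself note as equivalent in your parenthetical). Your direct verification that $\frac{d}{dt}F\equiv 0$, together with the positivity of $u(2-u^3)$ and of $v$ on $\Omega$ used to solve for $v$, is sound, so nothing is missing.
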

\begin{proof}
From \eqref{eqn:1-N4} we obtain
\begin{equation*}
\frac {dv}{du}=-\frac {v(1-2u^3)}{u(2-u^3)},
\end{equation*}
that is,
\begin{equation*}
\frac {dv}{v}=-\frac {1-2u^3}{u(2-u^3)}\,du.
\end{equation*}
Integrating this equation and using \eqref{eqn:2-N4},
we have 
\begin{equation*}
\log v=\log \bigl(u(2-u^3)^{-1/2}\bigr).
\end{equation*}
Therefore,
\begin{equation*}
v=\frac 1{\sqrt{u(2-u^3)}}.\qedhere
\end{equation*}
\end{proof}
As a consequence we have the following corollary.
%%, which completes the proof of Theorem \ref{Lap-flow: N4}.
\begin{corollary}\label{interval:N4}
The maximal solution of \eqref{eqn:1-N4}-\eqref{eqn:2-N4},
\begin{equation*}
I\ni t\mapsto \bigl(u(t),v(t)\bigr)\in\Omega
\end{equation*}
parametrizes the whole curve \eqref{eqn:3-N4}. 
Moreover,
the maximal solution is defined in the interval
$$
I=(t_{min}, +\infty),
$$
where
\begin{equation}\label{exprtmin}
t_{min} = - \frac{3}{2}  \int_{0}^1 \frac{x^{3/2}}{(2 - x^3)^{5/2}} dx,
\end{equation}
and 
\begin{equation*}
\begin{cases}\lim_{t\to t_{min}} u(t)=0,\\ \lim_{t\to t_{min}}v(t)=+\infty,\end{cases}
\qquad\begin{cases}\lim_{t\to + \infty} u(t)=2^{1/3},\\ \lim_{t\to + \infty}v(t)=+\infty.\end{cases}
\end{equation*}
\end{corollary}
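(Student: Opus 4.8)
The plan is to use Proposition \ref{system:N4} to collapse the planar system \eqref{eqn:1-N4} into a single autonomous equation for $u$, integrate it explicitly, and then read off the maximal interval and the endpoint behaviour from the resulting improper integral. First I would substitute the trajectory relation \eqref{eqn:3-N4}, i.e. $v^{3}=\bigl(u(2-u^{3})\bigr)^{-3/2}$, into the first equation of \eqref{eqn:1-N4}; this yields the separable equation
$$u'=\frac{2}{3}\,\frac{(2-u^{3})^{5/2}}{u^{3/2}},$$
which, together with the initial condition $u(0)=1$ coming from \eqref{eqn:2-N4}, is equivalent to
$$t=F\bigl(u(t)\bigr),\qquad F(u):=\frac{3}{2}\int_{1}^{u}\frac{x^{3/2}}{(2-x^{3})^{5/2}}\,dx,\qquad u\in(0,2^{1/3}).$$
Since the integrand is positive on $(0,2^{1/3})$, the function $F$ is strictly increasing and $C^{\infty}$ there, with $F(1)=0$.

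Next I would analyse the two endpoints. Near $x=0$ the integrand is comparable to $x^{3/2}/2^{5/2}$, hence integrable, so $F$ extends continuously to $u=0$ with $\lim_{u\to 0^{+}}F(u)=-\tfrac32\int_{0}^{1}x^{3/2}(2-x^{3})^{-5/2}\,dx=t_{min}$, which is exactly \eqref{exprtmin}. Near $x=2^{1/3}$ one writes $2-x^{3}=(2^{1/3}-x)(x^{2}+2^{1/3}x+2^{2/3})\sim 3\cdot 2^{2/3}\,(2^{1/3}-x)$, so the integrand behaves like a constant times $(2^{1/3}-x)^{-5/2}$, which is not integrable; hence $\lim_{u\to(2^{1/3})^{-}}F(u)=+\infty$. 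Therefore $F$ is a strictly increasing homeomorphism (indeed a diffeomorphism) from $(0,2^{1/3})$ onto $(t_{min},+\infty)$, and its inverse defines $u\colon(t_{min},+\infty)\to(0,2^{1/3})$. Setting $v=\bigl(u(2-u^{3})\bigr)^{-1/2}$, which is positive and $C^{\infty}$ on $(0,2^{1/3})$ so that $(u,v)\in\Omega$ throughout, gives a solution of \eqref{eqn:1-N4}--\eqref{eqn:2-N4}; since it is defined on all of $(t_{min},+\infty)$ and, as shown below, $(u,v)$ approaches $\partial\Omega$ together with $u^{2}+v^{2}\to+\infty$ at both ends, it is the maximal solution in the sense of the dichotomy recalled before the statement, so $I=(t_{min},+\infty)$.

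Finally, since $u$ is a bijection onto $(0,2^{1/3})$ and $v$ is determined by $u$ through \eqref{eqn:3-N4}, the image of $t\mapsto\bigl(u(t),v(t)\bigr)$ is exactly the curve \eqref{eqn:3-N4}; and from $\lim_{t\to t_{min}}u(t)=0$ and $\lim_{t\to+\infty}u(t)=2^{1/3}$ we get $u(2-u^{3})\to 0$ in both limits, hence $v=\bigl(u(2-u^{3})\bigr)^{-1/2}\to+\infty$, which are the four stated limits. I expect the main obstacle to be precisely the endpoint analysis of the improper integral $F$: one must verify carefully that it \emph{converges} at $u=0$ (to obtain the explicit value $t_{min}$) while it \emph{diverges} at $u=2^{1/3}$ (to obtain $\sup I=+\infty$), and then check that the trajectory so parametrized is genuinely the maximal solution rather than merely a solution lying on the curve.
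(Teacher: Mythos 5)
Your proposal is correct and follows essentially the same route as the paper: substitute the trajectory relation from Proposition \ref{system:N4} into the first equation to obtain the separable ODE $u'=\tfrac{2}{3}(2-u^{3})^{5/2}u^{-3/2}$, then determine $t_{min}$ and $t_{max}$ by integrating $dt/dx=\tfrac32 x^{3/2}(2-x^{3})^{-5/2}$ and checking convergence of the improper integral at $x=0$ and divergence at $x=2^{1/3}$. Your write-up is, if anything, slightly more careful than the paper's (explicit asymptotic comparison at both endpoints and an explicit maximality argument via the dichotomy), but the underlying argument is identical.
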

\begin{proof} Let $I= (t_{min}, t_{max})$ the existence interval
of the maximal solution $(u(t), v(t))$ of \eqref{eqn:1-N4} satisfying the initial 
conditions \eqref{eqn:2-N4}. Using the previous proposition and the first 
equation of \eqref{eqn:1-N4} we see that
$$
v(t)=( 2 u(t) -u(t)^4)^{-1/2},  \qquad  u'(t)= - \frac{2 u(t)^3 - 4}{3 u(t)^3 v(t)^3},
$$
which imply
$$
u'(t)=\frac{2}{3} \frac {(2 - u(t)^3)^{ \frac 5 2 }} {u(t)^{\frac 3 2}}.
$$
We define the functions $x(t)$ and $f(x)$ by
$$
x(t)=u(t), \quad f(x)= \frac{2}{3} \frac {(2 - x^3)^{\frac 5 2 }} {x^{\frac 3 2 }}.
$$ 
In order to find $t_{max}$, we can use that $\frac{dx}{dt} = f(x(t))$ or, equivalently, 
$$
\frac{dx}{f(x)} = dt.
$$
So, in particular, we have $$\frac{dt}{dx}= \frac{3}{2}x^{ \frac 3 2}(2-x^3)^{- \frac 5 2}.$$
Note that the function $\frac{3}{2} x^{ \frac 3 2} (2-x^3)^{- \frac 5 2}$ is increasing
from $0$, for $x=0$, to $+ \infty$, for $x=2^{ \frac 13}$. 
Then, integrating 
$\frac{dx}{f(x)}=dt$ between $t_{min}$ and $0$,
and using that $x(t_{min})=0$ and $x(0) =1$, we have that $t_{min}$ is finite  
and equal to the real number
$$
t_{min}=- \frac 3 2  \int_{0}^1 x^{3/2} (2 - x^3)^{-5/2} dx.
$$
Similarly, in order to find $t_{max}$ we integrate again  
$\frac{dx}{f(x)}=dt$ between $0$ and 
$t_{max}$. Since $x(t_{max} )= 2^{\frac 1 3}$ we get
$$
t_{max}=-\frac 3 2  \int_{1}^{2^{\frac 1 3}} x^{\frac 3 2} (2 - x^3)^{- \frac 5  2} dx,
$$
which implies that $t_{max}$ is $+ \infty$ because this integral is not defined in 
$x =2^{\frac 1 3}$. 
\end{proof}

\begin{theorem}\label{Lap-flow: N4} 
There exists a solution $\varphi_{4}(t)$
of the Laplacian flow of $\varphi_4$ on $N_4$ defined in the interval
$I=(t_{min},+\infty)$, where $t_{min}$ is the negative real number given by the elliptic integral
$$
t_{min}=  -\frac{3}{2} \int_{0}^1 x^{3/2} (2 - x^3)^{-5/2} dx.
$$
Moreover, the  underlying metrics  $g(t)$ of this solution converge smoothly, up to pull-back by time-dependent diffeomorphisms,  to a flat metric, uniformly on compact sets in $N_4$, as $t$ goes to infinity.
\end{theorem}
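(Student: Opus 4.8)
The strategy is parallel to the proof of Theorem \ref{Lap-flow: N2}: parametrize the left invariant closed $G_2$ forms by scaling the coframe, reduce the Laplacian flow of $\varphi_4$ to a system of ODEs, and then read off the existence interval and the asymptotics. The new feature is that here the flow does not collapse to a single scalar equation but to the two‑dimensional system \eqref{eqn:1-N4}, whose maximal trajectory was already analysed in Proposition \ref{system:N4} and Corollary \ref{interval:N4}. Concretely, for positive differentiable functions $f_i=f_i(t)$ with $f_i(0)=1$ put $x^i=x^i(t)=f_i(t)e^i$; with respect to this coframe the structure equations of $\mathfrak{n}_4=(0,0,e^{12},0,0,e^{13}+e^{24},e^{15})$ become
\begin{equation*}
dx^i=0\ (i=1,2,4,5),\qquad dx^3=\frac{f_3}{f_{12}}x^{12},\qquad dx^6=\frac{f_6}{f_{13}}x^{13}+\frac{f_6}{f_{24}}x^{24},\qquad dx^7=\frac{f_7}{f_{15}}x^{15},
\end{equation*}
and the $3$-form $\varphi_4(t)=-f_{124}e^{124}-f_{456}e^{456}+f_{347}e^{347}+f_{135}e^{135}+f_{167}e^{167}+f_{257}e^{257}-f_{236}e^{236}$ satisfies $\varphi_4(0)=\varphi_4$ and induces the metric $g(t)$ with $g(t)(e_i,e_i)=f_i(t)^2$. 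A direct computation gives $d\varphi_4(t)=(f_{347}-f_{167})e^{1247}+(f_{347}-f_{456})e^{1345}$, so $\varphi_4(t)$ is closed if and only if $f_{347}=f_{167}=f_{456}$. Assuming these relations, one computes $*_t\varphi_4(t)$, isolates its non‑closed summands, evaluates $\Delta_t\varphi_4(t)=-d*_td*_t\varphi_4(t)$, and compares it coefficient by coefficient with $\frac{d}{dt}\varphi_4(t)$; using $f_i(0)=1$ and the closedness relations, all products $f_{ijk}$ appearing in $\varphi_4(t)$ are forced to be constant except for two, say the functions $u=u(t)$ and $v=v(t)$ (chosen so that each metric coefficient $f_i^2$ is a monomial in $u,v$), and $\frac{d}{dt}\varphi_4(t)=\Delta_t\varphi_4(t)$ turns into exactly the system \eqref{eqn:1-N4} with initial conditions \eqref{eqn:2-N4}.

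The existence interval is then obtained for free. By Proposition \ref{system:N4} the maximal solution $(u(t),v(t))$ lies on the curve \eqref{eqn:3-N4}, and by Corollary \ref{interval:N4} its existence interval is precisely $I=(t_{min},+\infty)$ with $t_{min}=-\frac32\int_0^1 x^{3/2}(2-x^3)^{-5/2}\,dx$, while $u(t)\to 2^{1/3}$ and $v(t)\to+\infty$ as $t\to+\infty$. Substituting $(u(t),v(t))$ back into the ansatz produces the family $\varphi_4(t)$, which is a solution of \eqref{eq-Lap-flow:2} for $\varphi_4$ on $N_4$ defined on $I$.

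For the convergence, the explicit dependence of the $f_i$ on $u,v$ together with the limits above shows that all structure constants of $\mathfrak{n}_4$ in the coframe $\{x^i(t)\}$ tend to $0$ as $t\to+\infty$. Hence, following \cite{La4}, fixing on $\R^7$ the $G_2$ form $-x^{124}-x^{456}+x^{347}+x^{135}+x^{167}+x^{257}-x^{236}$ and regarding $\{x_i(t)\}$ as defining a one‑parameter family of nilpotent brackets $\mu(t)$ with $\mu(0)$ the bracket of $\mathfrak{n}_4$, one gets that $\mu(t)$ converges in $(\Lambda^2\R^7)^*\otimes\R^7$ to the abelian bracket; by \cite[Proposition 2.1]{La4} the metrics $g(t)$ converge, up to pull‑back by time‑dependent diffeomorphisms, to a flat metric, uniformly on compact subsets of $N_4$. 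Alternatively one may compute the Riemannian curvature $R(t)$ of $g(t)$ in the orthonormal coframe $\{x^i(t)\}$ directly, as in the proof of Theorem \ref{Lap-flow: N2}, and check that $\lim_{t\to+\infty}R(t)=0$.

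The main obstacle is the reduction in the first paragraph. Since $\mathfrak{n}_4$ is $3$-step and $de^6$ has two summands, the Hodge star and the iterated codifferential are appreciably heavier than for $N_2$, and — unlike there — the reduced system \eqref{eqn:1-N4} cannot be integrated in elementary closed form, which is exactly why Proposition \ref{system:N4} describes its maximal trajectory only implicitly. Arranging the coefficient comparison so that everything collapses to the two unknowns $u,v$ is the delicate point; once this is done, Corollary \ref{interval:N4} supplies both the maximal interval and the asymptotics needed for the convergence claim.
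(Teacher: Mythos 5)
Your reduction in the first paragraph does not work as stated: the purely diagonal ansatz $x^i(t)=f_i(t)e^i$, which gives
$\varphi_4(t)=-f_{124}e^{124}-f_{456}e^{456}+f_{347}e^{347}+f_{135}e^{135}+f_{167}e^{167}+f_{257}e^{257}-f_{236}e^{236}$,
is too small. A direct computation of $\Delta\varphi_4=-d\ast d\ast\varphi_4$ at $t=0$ gives
$\Delta\varphi_4=-2e^{124}+2e^{135}+e^{245}+e^{127}$, which has nonzero components along $e^{245}$ and $e^{127}$. These monomials never occur in your $\varphi_4(t)$, so $\frac{d}{dt}\varphi_4(t)$ has identically zero $e^{245}$ and $e^{127}$ components and the equation $\frac{d}{dt}\varphi_4(t)=\Delta_t\varphi_4(t)$ already fails at $t=0$. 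No choice of the seven functions $f_i$ can repair this; the coefficient comparison you describe is inconsistent rather than reducing to \eqref{eqn:1-N4}. This is not a delicate bookkeeping point but a wrong ansatz: the solution immediately leaves the diagonal family.

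The paper's proof fixes this by enlarging the ansatz with shear terms: $x^5=f_5e^5+h_1e^1$, $x^6=f_6e^6+h_2e^2$, $x^7=f_7e^7+h_3e^4$, with $h_j(0)=0$. These do not change the structure equations in the $x$-coframe, but they make the expansion of $-x^{124}-x^{456}+x^{347}+x^{135}+x^{167}+x^{257}-x^{236}$ in the $e$-basis acquire exactly the extra monomials $e^{146}$, $e^{245}$, $e^{127}$ needed to match $\Delta_t\varphi_4(t)$ (and the induced metric is then no longer diagonal in the $e$-basis, contrary to your claim that $g(t)$ is determined by the $f_i^2$ alone). Only after imposing the resulting algebraic relations \eqref{first-sol-N4} between the $f_i$ and the $h_j$ does the flow collapse to the two-dimensional system \eqref{eqn:1-N4}--\eqref{eqn:2-N4}. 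From that point on, your use of Proposition \ref{system:N4} and Corollary \ref{interval:N4} to obtain the interval $(t_{min},+\infty)$ and the limits $u\to 2^{1/3}$, $v\to+\infty$, as well as your convergence argument (vanishing of the structure constants in the $x$-coframe, or equivalently of the curvature, in the spirit of \cite{La4}), agrees with the paper.
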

\begin{proof}
Let us consider some differentiable real functions
$f_i=f_{i}(t)$ $(i=1,\dots,7)$ and $h_{j}=h_{j}(t)$ $(j=1,2,3)$ depending on a parameter 
$t\in I\subset{\mathbb{R}}$ such that $f_{i}(0)=1, h_{j}(0)=0$ and $f_{i}(t) \not=0$, for any $t\in I$
and for any $i$ and $j$.
For each $t\in I$, we consider the basis $\{x^1,\dots,x^7\}$ of left invariant $1$-forms
on $N_4$ defined by
\begin{equation*}
\begin{aligned}
x^i=x^i(t)=f_{i}(t)e^i,\quad 1\leq i\leq 4,  \quad x^5=x^{5}(t)=f_5(t)e^5+h_{1}(t)e^1, \\
x^6=x^{6}(t)=f_6(t)e^6+h_{2}(t)e^2,  \quad
x^7=x^{7}(t)=f_7(t)e^7+h_{3}(t)e^4. 
\end{aligned}
\end{equation*}
The structure equations of $N_4$ with respect to this basis are
\begin{equation} 
\begin{aligned}\label{new-eq:N4}
dx^i=0, \quad i=1,2,4,5,  \quad dx^3=\frac{f_{3}}{f_{12}} x^{12},   \\
dx^6=\frac{f_{6}}{f_{13}} x^{13}+\frac{f_{6}}{f_{24}} x^{24}, \quad dx^7=\frac{f_{7}}{f_{15}} x^{15}.
\end{aligned}
\end{equation}
For any $t\in I$, we define the $G_2$ form $\varphi_{4}(t)$ on $N_4$ by
\begin{equation}
\begin{aligned}\label{sol-N4-1}
\varphi_{4}(t)&=-x^{124}-x^{456}+x^{347}+x^{135}+x^{167}+x^{257}-x^{236} \\
&=\Big(-f_{124}-f_{4}h_{12}-f_{2}h_{13}+f_{1}h_{23}\Big)e^{124}-f_{456}e^{456}+f_{347}e^{347} \\
&+f_{135}e^{135}+f_{167}e^{167}+f_{257}e^{257}-f_{236}e^{236}+\Big(f_{46}h_1-f_{16}h_3\Big)e^{146}Ê\\
&-\Big(f_{45}h_2+f_{25}h_3\Big)e^{245}+\Big(-f_{27}h_1+f_{17}h_2\Big)e^{127}.
\end{aligned}
\end{equation}
Clearly $\varphi_{4}(0)=\varphi_{4}$ since $f_{i}(0)=1$ and $h_{j}(0)=0$. 
Moreover, using \eqref{new-eq:N4} and \eqref{sol-N4-1}, one can check that $d\varphi_{4}(t)=0$ if and only if
\begin{equation*} \label{eq-Lap-flow:N4}
f_{16}(t)=f_{34}(t), \quad f_{37}(t)=f_{56}(t),
\end{equation*}
for any $t$.

To study the flow \eqref{eq-Lap-flow:2} of $\varphi_4$, 
we need to determine the functions $f_i$, $h_j$ and the interval $I$ so 
that $\frac{d}{dt}\varphi_{4}(t)=\Delta_{t}\varphi_{4}(t)$, for $t\in I$.
On the one hand, using 
\eqref{sol-N4-1} we have 
\begin{equation}
\begin{aligned} \label{eq-Lap-flow:N4-5}
\frac{d}{dt}\varphi_{4}(t)&=\Big(-f_{124}-f_{4}h_{12}-f_{2}h_{13}+f_{1}h_{23}\Big)' e^{124}-\Big(f_{456}\Big)' e^{456}+\Big(f_{347}\Big)' e^{347} \\
&+\Big(f_{135}\Big)' e^{135}+\Big(f_{167}\Big)' e^{167}+\Big(f_{257}\Big)' e^{257}-\Big(f_{236}\Big)' e^{236}+\Big(f_{46}h_1-f_{16}h_3\Big)' e^{146}Ê\\
&-\Big(f_{45}h_2+f_{25}h_3\Big)' e^{245}+\Big(-f_{27}h_1+f_{17}h_2\Big)' e^{127}.
\end{aligned}
\end{equation}
On the other hand, 
\begin{equation*}
\begin{aligned} \label{eq-ast-N4}
*_{t}\varphi_{4}(t)=x^{3567}+x^{1237}+x^{1256}-x^{2467}+x^{2345}+x^{1457}+x^{1346}.
\end{aligned}
\end{equation*}
 So, $x^{3567}$ and $x^{2467}$ are the nonclosed summands in $*_{t}\varphi_{4}(t)$. 

Then, for $\Delta_{t}\varphi_{4}(t)=-d *_{t}d *_{t}\varphi_{4}(t)$ we obtain
\begin{equation}\label{eq-Lap-flow:N4-7}
\begin{array}{lcl}
\Delta_{t}\varphi_{4}(t)&=&-\Big(f_{124}(\frac{f_{3}^2}{f_{1}^2 f_{2}^2}+\frac{f_{6}^2}{f_{2}^2f_{4}^2})
-\frac{f_{37}h_{3}}{f_{15}}-\frac{f_{6}^2h_{1}}{f_{13}}\Big)e^{124}\\
&&+f_{135}\Big(\frac{f_{6}^2}{f_{1}^2f_{3}^2}+\frac{f_{7}^2}{f_{1}^2f_{5}^2}\Big)e^{135} 
+\frac{f_{5}f_{6}^2}{f_{13}}e^{245} + \frac{f_{3}f_{7}^2}{f_{1}f_{5}}e^{127}.
\end{array}
\end{equation}
Comparing \eqref{eq-Lap-flow:N4-5} and \eqref{eq-Lap-flow:N4-7} we see that the functions
$f_i$, $h_1$ and $h_3$ satisfy
\begin{equation*}
f_{167}(t)=f_{236}(t)=f_{257}(t)=f_{347}(t)=f_{456}(t)=1,  \qquad  f_{46}(t)h_{1}(t)-f_{16}(t)h_{3}(t)=0,
\end{equation*}
for any $t\in I$. But these equations are satisfied if
\begin{equation}\label{first-sol-N4}
f_1=f_{23}^2, \qquad f_4=f_2, \qquad f_5=f_3, \qquad f_6=f_7=\frac{1}{f_{23}},  \qquad 
h_1=f_2f_{3}^2h_3.
\end{equation}
Using \eqref{first-sol-N4}, we write \eqref{eq-Lap-flow:N4-5} and \eqref{eq-Lap-flow:N4-7}
in terms of $f_i$, $h_1$ and $h_3$. Then, we see that $\frac{d}{dt}\varphi_{4}(t)=\Delta_{t}\varphi_{4}(t)$
if and only if  
\begin{equation}\label{second-sol-N4}
\begin{array}{l} 
f_1=u\cdot v,  \quad f_2=f_4=v^{1/2},  \quad f_3=f_5=u^{1/2},  \quad 
f_6=f_7=(uv)^{-1/2}, \\
h_1=\frac{1}{2}u^{5/2}v-\frac12u^{1/2}, \quad h_2=0,  \quad 
h_3=\frac12 u^{3/2} v^{1/2}-\frac12(uv)^{-1/2},
\end{array}
\end{equation}
where $u=u(t)$ and $v=v(t)$ are differentiable real functions satisfying the 
 system of ordinary differential equations \eqref{eqn:1-N4} with initial conditions \eqref{eqn:2-N4}.
By Corollary \ref{interval:N4}, we know that the system \eqref{eqn:1-N4}-\eqref{eqn:2-N4} has a 
solution $u=u(t), v=v(t)$
defined in  $I=(t_{min},+\infty)$.
Then, taking into account
 \eqref{sol-N4-1} and \eqref{second-sol-N4}, the family of closed $G_2$ 
 forms $\varphi_{4}(t)$ solving \eqref{eq-Lap-flow:2} for $\varphi_4$ is given by
\begin{align*}
\varphi_{4}(t)=&\frac{1}{4} e^{124} \left(-u^4v^2+2 u^2 v-4 u v^2-1\right)+\frac{1}{2} e^{127} \left(u^2 v-1\right)+
u^2 v e^{135}\\
&+e^{167}-e^{236}+\frac{1}{2} e^{245} \left(u^2v-1\right)+e^{257}+e^{347}-e^{456},
   \end{align*}
for $t \in (t_{min}, +\infty)$.
The underlying metric $g(t)$ of  this  solution converges 
to  a  flat metric.
To check that the corresponding manifold in the limit is flat, we note that  all non-vanishing coefficients 
of the Riemannian curvature  $R(t)$ of $g(t)$ are proportional to the function $2u(t)-u^4(t)$. According with 
Corollary \ref{interval:N4}),
we have that the function $u(t)$ satisfies
$$\lim_{t\rightarrow + \infty}u(t)=2^{1/3},$$
and so
$$\lim_{t\rightarrow + \infty} R(t)=0.$$ \end{proof}

Concerning the Laplacian flow \eqref{eq-Lap-flow:2}
of the closed $G_2$ form $\varphi_6$ on $N_6$ we have the 
following.
\begin{theorem}\label{Lap-flow: N6}
The Laplacian flow of $\varphi_6$ has a
solution $\varphi_{6}(t)$ on $N_6$ defined in the interval
$I=(t_{min},+\infty)$,
where $t_{min}$ is the negative real number given by \eqref{exprtmin}.
Moreover, the underlying metrics  $g(t)$ of this solution converge smoothly, up to pull-back by time-dependent diffeomorphisms,  to a flat metric, uniformly on compact sets in $N_6$, as $t$ goes to infinity.
\end{theorem}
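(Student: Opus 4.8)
The plan is to follow closely the proofs of Theorems \ref{Lap-flow: N2} and \ref{Lap-flow: N4}. Recall from Theorem \ref{classification} that $\mathfrak{n}_6=(0,0,0,e^{12},e^{13},e^{14},e^{15})$, and that $\varphi_6$ is given by \eqref{def: varphi_6}. First I would introduce differentiable real functions $f_i=f_i(t)$ $(i=1,\dots,7)$ with $f_i(0)=1$, $f_i(t)\neq 0$, together with auxiliary functions $h_j=h_j(t)$ with $h_j(0)=0$ accounting for the ``shear'' part of the coframe (the appearance of the extra $e^{146}$, $e^{245}$, $e^{127}$ terms in \eqref{sol-N4-1} suggests that such shear terms will be needed here too), and set $x^i(t)=f_i(t)e^i$, corrected by the $h_j$ in the last few slots. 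I would then rewrite the structure equations of $N_6$ in the time-dependent coframe $\{x^1,\dots,x^7\}$, express the left invariant $G_2$ form $\varphi_6(t)$ obtained from $\varphi_6$ by replacing each $e^i$ with $x^i$, and impose $d\varphi_6(t)=0$; this yields a first batch of algebraic constraints tying the $f_i$ and $h_j$ together.

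Next I would compute $*_t\varphi_6(t)$ with respect to the metric $g_t$ making the frame dual to $\{x^i(t)\}$ orthonormal, identify the few non-closed summands of $*_t\varphi_6(t)$, and hence write $\Delta_t\varphi_6(t)=-d*_td*_t\varphi_6(t)$ explicitly as a combination of the $e^{ijk}$. Comparing with $\frac{d}{dt}\varphi_6(t)$ term by term should force all but one of the wedge coefficients to be constant, hence equal to $1$ by the initial condition $\varphi_6(0)=\varphi_6$; this pins down all the $f_i$ and $h_j$ in terms of two positive functions $u=u(t)$ and $v=v(t)$, in the same manner as \eqref{second-sol-N4}. The residual equations should reduce precisely to the system \eqref{eqn:1-N4} with initial conditions \eqref{eqn:2-N4}. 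Granting this reduction, Corollary \ref{interval:N4} immediately produces the maximal solution $(u(t),v(t))$ on $I=(t_{min},+\infty)$, with $t_{min}$ the elliptic integral \eqref{exprtmin}, and hence a solution $\varphi_6(t)$ of \eqref{eq-Lap-flow:2} on the same interval.

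For the asymptotic statement I would, exactly as in the proof of Theorem \ref{Lap-flow: N4}, compute the Riemannian curvature $R(t)$ of the underlying metric $g(t)$ in the orthonormal frame $\{x_i(t)\}$ and observe that every non-vanishing component is proportional to the function $2u(t)-u(t)^4$; since Corollary \ref{interval:N4} gives $\lim_{t\to+\infty}u(t)=2^{1/3}$, this function tends to $0$, so $\lim_{t\to+\infty}R(t)=0$ and the limiting metric is flat. To upgrade this to smooth convergence, up to time-dependent diffeomorphisms, uniformly on compact subsets of $N_6$, I would reinterpret the flow as a bracket flow on $\R^7$ in the spirit of \cite{La4}: fixing the $G_2$ form and letting the Lie bracket $\mu(t)$ evolve, the time-dependent coframe $\{x^i(t)\}$ determines brackets $\mu(t)\in(\Lambda^2\R^7)^*\otimes\R^7$ converging to the null (abelian) bracket, and then \cite[Proposition 2.1]{La4} yields the claimed $\mathcal{C}^{\infty}$ convergence of the metrics $g(t)$.

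The main obstacle I expect is purely computational: choosing the correct ansatz for the shear functions $h_j$ so that the closedness constraints are compatible with the form of $\Delta_t\varphi_6(t)$, and then checking that after substitution the two surviving ordinary differential equations coincide with \eqref{eqn:1-N4} and not with some other, possibly inequivalent, system — a place where a sign or an exponent could easily slip. A secondary point worth verifying is that the boundary behaviour $u\to 0$, $v\to+\infty$ as $t\to t_{min}$ does not make the coframe $\{x^i(t)\}$, and hence $\varphi_6(t)$, degenerate for $t>t_{min}$; since the $f_i$ and $h_j$ are monomials in $u$ and $v$ this should be harmless, but one should confirm that $\varphi_6(t)$ remains a genuine (non-degenerate) $G_2$ form on all of $I$.
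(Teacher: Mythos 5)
Your proposal follows essentially the same route as the paper: the same ansatz $x^i=f_ie^i$ for $i\le 5$ with shear terms $h_1,h_2$ in $x^6,x^7$, reduction of the flow to the system \eqref{eqn:1-N4}--\eqref{eqn:2-N4} (the paper notes, as you anticipate, that it is literally the same system as for $N_4$), and then Corollary \ref{interval:N4} for the interval $(t_{min},+\infty)$ and the limit $u\to 2^{1/3}$ for flatness. The only small discrepancy is cosmetic: for $N_6$ the paper states that the curvature components are proportional to $u^p(2-u^3)^q$ with $q>0$ rather than to $2u-u^4$ as in the $N_4$ case, but either way the factor $2-u^3\to 0$ forces $R(t)\to 0$, so your conclusion stands.
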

\begin{proof}
We take differentiable real functions
$f_i=f_{i}(t)$ $(i=1,\dots,7)$ and $h_{j}=h_{j}(t)$ $(j=1,2)$ depending on a parameter 
$t\in I\subset{\mathbb{R}}$ such that $f_{i}(0)=1, h_{j}(0)=0$ and $f_{i}(t)\not=0$, for any $t\in I$
and for any $i$ and $j$.
Now, for each $t\in I$, we consider the basis $\{x^1,\dots,x^7\}$ of left invariant $1$-forms
on $N_6$ defined by
\begin{equation*}
\begin{aligned}
x^i&=x^i(t)=f_{i}(t)e^i,\quad 1\leq i\leq 5, \\
x^6&=x^{6}(t)=f_6(t)e^6+h_{1}(t)e^2,  \\
x^7&=x^{7}(t)=f_7(t)e^7+h_{2}(t)e^3. 
\end{aligned}
\end{equation*}
For any $t\in I$, let $\varphi_{6}(t)$ the $G_2$ form on $N_6$ defined by
\begin{equation}
\label{sol-N6-1}
\varphi_{6}(t)=x^{123}+x^{145}+x^{167} + x^{257} - x^{246}+x^{347}+x^{356}.
\end{equation}

In order to study the flow \eqref{eq-Lap-flow:2} of $\varphi_6$, we proceed
as in the proof of Theorem \ref{Lap-flow: N4}. 
We see that the forms $\varphi_{6}(t)$ defined by \eqref{sol-N6-1} are
a solution of \eqref{eq-Lap-flow:2} if and only if the functions
$f_i$, $h_1$ and $h_2$ satisfy
\begin{equation*}\label{second-sol-N6}
\begin{aligned}
f_1=u\cdot v,  \qquad f_2=f_3=v^{1/2},  \qquad f_4=f_5=u^{1/2},  \\
f_6=f_7=(uv)^{-1/2}, \qquad h_1=h_2=-\frac{1}{2}(uv)^{-1/2}+\frac12u^{3/2}v^{1/2},
\end{aligned}
\end{equation*}
where $u=u(t)$ and $v=v(t)$ are differentiable real functions satisfying the 
system of ordinary differential equations
\begin{equation}\label{eqn:1-N6}
\begin{cases}
u'=\dfrac 23\,\dfrac{2-u^3}{u^3v^3},\\ v'=-\dfrac 23\,\dfrac{1-2u^3}{u^4v^2},\end{cases}
\end{equation} 
with initial conditions
\begin{equation}\label{eqn:2-N6}
u(0)=v(0)=1.
\end{equation}

Clearly, the systems \eqref{eqn:1-N6}-\eqref{eqn:2-N6} and 
\eqref{eqn:1-N4}-\eqref{eqn:2-N4} are the same.  
Thus, the maximal solution of \eqref{eqn:1-N6}-\eqref{eqn:2-N6}   
satisfies the properties expressed in Corollary \ref{interval:N4}
for the maximal solution of \eqref{eqn:1-N4}-\eqref{eqn:2-N4}.

To finish the proof we see that, for $t \in (t_{min}, +\infty)$,
the expression of $\varphi_{6}(t)$ is given by
\begin{align*}
\varphi_{6}(t)=&\frac14(1+4uv^2-2u^2v+u^4v^2)e^{123}+e^{347}+e^{356}+e^{167}-e^{246}+e^{257} \\
&+u^2ve^{145}+\frac12(1-u^2v)(e^{136}-e^{127}).
\end{align*}
The underlying metric $g(t)$ of this  solution converges 
to  a  flat metric.
To check that the limit metric is flat, we note that  all non-vanishing coefficients 
of the Riemannian curvature  $R(t)$ of $g(t)$ are proportional to the function $$u^p(t)(2-u^3(t))^q,$$ where $p$ and $q$ are real numbers satisfying that $q >0$. According with 
Corollary \ref{interval:N4}),
we have that the function $u(t)$ satisfies
$$\lim_{t\rightarrow + \infty}u(t)=2^{1/3},$$
and so
$$\lim_{t\rightarrow + \infty} R(t)=0.$$
\end{proof}
\begin{remark} Note that  surprising in the $N_4$ and  $N_6$ cases we get the same system of equations.  \end{remark}

 Finally, for the Laplacian flow of the closed $G_2$ form $\varphi_{12}$ on $N_{12}$
we have the following.
\begin{theorem}\label{Lap-flow: N12} The family of closed $G_2$ forms $\varphi_{12}(t)$ on $N_{12}$ given by
\begin{equation}\label{varphi12t}
\varphi_{12}(t)=-e^{124}+e^{167}+f(t)^6 e^{135}-f(t)^6 e^{236}+e^{257}+e^{347}-e^{456}, \, \,  t\in ( -3, +\infty)
\end{equation}
is the solution of the Laplacian flow of $\varphi_{12}$, where $f=f(t)$ is the function 
\begin{equation*}
f(t)=\Big(\frac13t+1\Big)^{1/8}.
\end{equation*}
Moreover,  the underlying metrics  $g(t)$ of this solution converge smoothly, up to pull-back by time-dependent diffeomorphisms,  to a flat metric, uniformly on compact sets in $N_{12}$, as $t$ goes to infinity. 
\begin{proof} Let $f_i=f_i(t)$ $(i=1,\dots, 7)$ be some differentiable real functions depending on a parameter $t\in I\subset \mathbb{R}$ such that $f_i(0)=1$ and $f_i(t)\neq 0$, for any $t \in I$, where $I$ is an open interval. For each $t\in I$, we consider the basis $\{x^1, \dots, x^7\}$ of left invariant 1-forms on $N_{12}$ defined by 
\begin{equation*}
x^i=x^i(t)=f_i(t)e^i, \hspace{0.2cm} 1\leq i \leq 7.
\end{equation*}
Then, from \eqref{n12} the structure equations of $N_{12}$ with respect to this basis are
\begin{equation}\label{equations_n12}
\begin{aligned}
&dx^i=0, \hspace{0.2cm} i=1,2,3, \hspace{0.2cm} \hspace{2.4cm} dx^4=\frac{\sqrt{3}}{6}\frac{f_4}{f_{12}}x^{12},\\
&dx^5=-\frac{1}{4}\frac{f_5}{f_{23}}x^{23}+\frac{\sqrt{3}}{12}\frac{f_5}{f_{13}}x^{13}, \hspace{0.8cm} dx^6=-\frac{\sqrt{3}}{12}\frac{f_6}{f_{23}}x^{23}-\frac{1}{4}\frac{f_6}{f_{13}}x^{13},\\
&dx^7=-\frac{\sqrt{3}}{6}\frac{f_7}{f_{34}}x^{34}+\frac{\sqrt{3}}{12}\frac{f_7}{f_{25}}x^{25}+\frac{1}{4}\frac{f_7}{f_{26}}x^{26}+\frac{\sqrt{3}}{12}\frac{f_7}{f_{16}}x^{16}-\frac{1}{4}\frac{f_7}{f_{15}}x^{15}.
\end{aligned}
\end{equation}
Now, for any $t\in I$, we consider the $G_2$ form $\varphi_{12}(t)$ on $N_{12}$ given by 
\begin{equation}\label{phi12}
\begin{aligned}
\varphi_{12}(t)&=-x^{124}+x^{167}+x^{135}-x^{236}+x^{257}+x^{347}-x^{456}=\\
&=-f_{124}e^{124}+f_{167}e^{167}+f_{135}e^{135}-f_{236}e^{236}+f_{257}e^{257}+f_{347}e^{347}-f_{456}e^{456}.
\end{aligned}
\end{equation}
Note that $\varphi_{12}(0)=\varphi_{12}$ and, for any $t$, the 3-form $\varphi_{12}(t)$ on $N_{12}$ determines the metric $g_t$ such that the basis $\{x_i=\frac{1}{f_1}e_i; i=1,\dots, 7\}$ of $\mathfrak{n}_{12}$ is orthonormal. So, $g_t(e_i, e_i)=f_i^2$. \\
We need to determine the functions $f_i$ and the interval $I$ so that $\frac{d}{dt}\varphi_{12}(t)=\Delta_t \varphi_{12}(t)$, for $t\in I$. Using \eqref{phi12} we have 
\begin{equation}\label{dtvarphi12}
\begin{aligned}
\frac{d}{dt}\varphi_{12}(t)=&-(f_{124})'e^{124}+(f_{167})'e^{167}+(f_{135})'e^{135}-(f_{236})'e^{236}+\\
&+(f_{257})'e^{257}+(f_{347})'e^{347}-(f_{456})'e^{456}.
\end{aligned}
\end{equation} 
Now, we calculate $\Delta_t\varphi_{12}(t)=-d\ast_td\ast_t\varphi_{12}(t)$. On the one hand, we have
\begin{equation}\label{astvarphi12}
\ast_t\varphi_{12}(t)=x^{3567}-x^{2467}+x^{2345}+x^{1457}+x^{1346}+x^{1256}+x^{1237}.
\end{equation}
So, $x^{2467}$ and $x^{1457}$ are the unique non closed summands in $\ast_t\varphi_{12}(t)$. Then, taking into account the structure equations \eqref{equations_n12} and that $x^i(t)=f_i(t)e^i, 1\leq i \leq 7$ we obtain
\begin{equation}\label{Deltavarphi12}
\begin{aligned}
\Delta_t\varphi_{12}(t)=&-\frac{(f_{15}+f_{26})(f_5^2f_6^2+f_3^2f_7^2)}{16f_1f_2f_3f_5f_6}(e^{236}-e^{135})+\\
&+\frac{(f_{15}+f_{26})(f_5^2f_6^2-f_3^2f_7^2)}{16\sqrt{3}f_1f_2f_3f_5f_6}(e^{136}+e^{235}).
\end{aligned}
\end{equation} 
Comparing \eqref{dtvarphi12} and \eqref{Deltavarphi12}, in particular, we have that 
\begin{equation*}
(f_{124})'=(f_{167})'=(f_{257})'=(f_{347})'=(f_{456})'=0,
\end{equation*}
and since $\varphi_{12}(0)=\varphi_{12}$ this imply that 
\begin{equation}\label{identidades}
f_{124}(t)=f_{167}(t)=f_{257}(t)=f_{347}(t)=f_{456}(t)=1,
\end{equation}
for any $t\in I$.
From the equation \eqref{identidades} we obtain that 
\begin{equation*}
f_1=f_1; \quad f_2=f_2;  \quad f_3=(f_1f_2)^2; \quad f_4=\frac{1}{f_1f_2}; \quad f_5=f_1; \quad f_6=f_2; \quad f_7=\frac{1}{f_1f_2}.
\end{equation*}
Let us consider $f=f_1=f_2$. With these concrete values \eqref{dtvarphi12} and \eqref{Deltavarphi12} become
\begin{equation}\label{dtvarphi12new}
\frac{d}{dt}\varphi_{12}(t)=(f^6(t))'(e^{135}-e^{236}),
\end{equation}
and
\begin{equation}\label{Deltavarphi12new}
\Delta_t\varphi_{12}(t)=\frac{f(t)^{-2}}{4}(e^{135}-e^{236}), 
\end{equation}
respectively.
From \eqref{dtvarphi12new} and \eqref{Deltavarphi12new} finding a solution of the Laplacian flow is equivalent to solve $f^7f'=\frac{1}{24}$. Integrating this equation, we obtain 
\begin{equation*}
f^8=\frac13t+B, \quad B=constant.
\end{equation*}
But $\varphi(0)=\varphi_{12}$ implies that $f(0)=1$, that is, $B=1$. Hence 
\begin{equation*}
f(t)=\Big(\frac13t+1\Big)^{1/8},
\end{equation*}
and so the one parameter family of 3-forms $\{\varphi_{12}(t)\}$ given by \eqref{varphi12t} is the solution of the Laplacian flow of $\varphi_{12}$ on $N_{12}$, and it is defined for every $t \in (-3, +\infty)$.\\
Finally, we study the behavior of  the underlying metric $g(t)$ of such a solution in the limit.
If we think of  the Laplacian flow  as a one parameter  family of $G_2$ manifolds with a closed $G_2$-structure, it can also be checked that, in the limit, the resulting manifold has vanishing curvature.   Denote by $g(t)$, $t\in \left (-3,+ \infty \right)$, the metric on $N_{12}$ induced by the $G_2$ form $\varphi_{12}(t)$ defined by \eqref{varphi12t}. Then, $g(t)$ has the following expression
\begin{equation*}
\begin{aligned}
g(t)&=\Big(\frac{1}{3}t+1\Big)^{1/4} (e^1)^2 +\Big(\frac{1}{3}t+1\Big)^{1/4} (e^2)^2 +\Big(\frac{1}{3}t+1\Big)^{-1} (e^3)^2
\\&+\Big(\frac{1}{3}t+1\Big)^{-1/2} (e^4)^2 +\Big(\frac{1}{3}t+1\Big)^{1/4} (e^5)^2 +\Big(\frac{1}{3}t+1\Big)^{1/4} (e^6)^2 
\\&+\Big(\frac{1}{3}t+1\Big)^{-1/2} (e^7)^2.
\end{aligned}
\end{equation*}
  Concretely, every non vanishing coefficient appearing in the expression of the Riemannian curvature $R(t)$  of $g(t)$ is proportional to $(t+3)^{-1}$. Therefore, $\lim_{t \rightarrow +\infty} R(t)=0$.
\end{proof}
\end{theorem}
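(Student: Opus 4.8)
The plan is to follow the same strategy used in the proofs of Theorems \ref{Lap-flow: N2} and \ref{Lap-flow: N4}: reduce the Laplacian flow on $N_{12}$ to an ODE system by exploiting left invariance, solve the system explicitly, and then read off the limiting behaviour of the induced metric. Since $\varphi_{12}$ is a left invariant closed $G_2$ form and the Laplacian flow is diffeomorphism invariant, the solution $\varphi_{12}(t)$ stays left invariant, so we may search for it inside a finite-dimensional family of invariant $3$-forms. First I would introduce time-dependent functions $f_i=f_i(t)$ with $f_i(0)=1$, $f_i(t)\neq0$, set $x^i(t)=f_i(t)e^i$, and compute the structure equations of $N_{12}$ in the new coframe from \eqref{n12}. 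Then I would take the ansatz \eqref{phi12}, namely $\varphi_{12}(t)=-x^{124}+x^{167}+x^{135}-x^{236}+x^{257}+x^{347}-x^{456}$, which is automatically closed (one checks $d\varphi_{12}(t)=0$ directly, since each summand turns out to be closed given the structure equations).

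Next I would compute $*_t\varphi_{12}(t)$, identify which summands fail to be closed, and hence obtain $\Delta_t\varphi_{12}(t)=-d*_td*_t\varphi_{12}(t)$ explicitly; the excerpt shows this reduces to a combination of $e^{135}-e^{236}$ together with a spurious term in $e^{136}+e^{235}$. Matching $\tfrac{d}{dt}\varphi_{12}(t)=\Delta_t\varphi_{12}(t)$ coefficient by coefficient forces $f_{124}(t)=f_{167}(t)=f_{257}(t)=f_{347}(t)=f_{456}(t)=1$, which pins down $f_3=(f_1f_2)^2$, $f_4=f_7=(f_1f_2)^{-1}$, $f_5=f_1$, $f_6=f_2$, and (so that the $e^{136}+e^{235}$ term vanishes) $f_1=f_2=:f$. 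With these substitutions both sides collapse to a multiple of $e^{135}-e^{236}$, and the flow equation becomes the scalar ODE $f^7 f'=\tfrac1{24}$, equivalently $(f^8)'=\tfrac13$, giving $f(t)=\bigl(\tfrac13 t+1\bigr)^{1/8}$ on the maximal interval $(-3,+\infty)$. This yields \eqref{varphi12t}.

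For the convergence statement I would, as in the earlier theorems, write out the induced metric $g(t)=\sum_i f_i(t)^2(e^i)^2$ explicitly in terms of $\bigl(\tfrac13 t+1\bigr)$, compute the Riemannian curvature tensor $R(t)$ in the fixed frame $\{e_i\}$, and observe that every nonvanishing component is proportional to a negative power of $t+3$; hence $\lim_{t\to+\infty}R(t)=0$. To upgrade this pointwise curvature decay to the asserted smooth convergence of $g(t)$, up to time-dependent diffeomorphisms, to a flat metric uniformly on compact sets of $N_{12}$, I would invoke the bracket-flow description mentioned in the introduction: by \cite[Proposition 2.1]{La4} such $\mathcal C^\infty$ convergence on compact subsets of $\R^7$ is equivalent to convergence of the associated nilpotent Lie brackets $\mu_t$ in $\mathcal N\subset(\Lambda^2\R^7)^*\otimes\R^7$, and the rescaled brackets determined by the coframe $\{x^i(t)\}$ manifestly converge to the abelian (null) bracket as $t\to+\infty$, whose associated metric is flat.

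**The main obstacle** I expect is the bookkeeping in computing $*_t\varphi_{12}(t)$ and $\Delta_t\varphi_{12}(t)$ for the rather intricate structure equations \eqref{equations_n12} of $N_{12}$, and in particular making sure that the "off-diagonal" contribution proportional to $e^{136}+e^{235}$ in \eqref{Deltavarphi12} is killed precisely by imposing $f_1=f_2$ together with the constraints already forced by \eqref{identidades} — i.e.\ checking that the reduced ansatz is genuinely consistent and that no further closedness or matching condition has been overlooked. Everything after that (solving $(f^8)'=\tfrac13$ and extracting the curvature asymptotics) is routine.
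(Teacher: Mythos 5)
Your proposal follows essentially the same route as the paper's proof: the same scaled coframe ansatz $x^i=f_i(t)e^i$, the same computation of $\ast_t\varphi_{12}(t)$ and $\Delta_t\varphi_{12}(t)$ leading to the constraints $f_{124}=f_{167}=f_{257}=f_{347}=f_{456}=1$ and $f_1=f_2=f$, the same scalar ODE $f^7f'=\tfrac1{24}$ with maximal interval $(-3,+\infty)$, and the same curvature asymptotics $R(t)\sim (t+3)^{-1}$ for the limit. The only inaccuracy is your parenthetical claim that the ansatz is automatically closed because ``each summand turns out to be closed'': this is false for generic $f_i$ (e.g.\ $d(x^{167})\neq 0$, and $d\varphi_{12}(t)=0$ actually imposes relations such as $f_3f_7=f_5f_6$), but since these relations are satisfied by the $f_i$ ultimately obtained --- and closedness is in any case preserved along the flow --- this does not affect the argument.
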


\begin{remark}
Note that, for every $t \in \left (-3,+ \infty \right)$, the metric $g(t)$ is a nilsoliton on the Lie algebra $\frak n_{12}$ of $N_{12}$.
In fact,  with respect to the orthonormal basis $(x_1(t), \ldots, x_7 (t))$, we have \begin{equation*} Ric(g(t))  = - \frac{3}{4(3+t)} Id +  \frac{3}{8(3+t)} diag(1,1,1,2,2,2,3) =
\frac{3}{(3+t)} Ric(g(0))\end{equation*} with  $ \frac{3}{8(3+t)} diag(1,1,1,2,2,2,3)$ a derivation of $\frak n_{12}$ for every $t$.
\end{remark}

\end{section}
%%%%%%%%%%%%%%%%%%%%%%%%%%%%%%%%%%%%%%%%%%%%%%%%%%%%%%
%%%%%%%%%%%%%%%%%%%%%%%%%%%%%%%%%%%%%%%%%%%%%%%%%%%%%%

\bigskip

\noindent {\bf Acknowledgments.} We would like to thank Vivina Barutello, Ernesto Buzano, Diego Conti, Edison Fern\'andez, Jorge Lauret  and Mario Valenzano  for useful
conversations. Moreover, we are grateful to the anonymous referees for useful comments and improvements.  This work has been partially supported
by (Spanish) MINECO Project MTM2011-28326-C02-02,
Project UPV/EHU ref.\ UFI11/52 and by (Italian) GNSAGA of INdAM.

%%%%%%%%%%%%%%%%%%%%%%%%%%%%%%%%%%%%%%%%%%%%%%%%%%%%%%%%%%%%%%%%%%%%%%%%%%%%%%%%%%%%%%
%%%%%%%%%%%%%%%%%%%%%%%%%%%%%%%%%%%%%%%%%%%%%%%%%%%%%%%%%%%%%%%%%%%%%%%%%%%%%%%%%%%%%%
%%%%%%%%%%%%%%%%%%%%%%%%%%%%%%%%%%%%%%%%%%%%%%%%%%%%%%%%%%%%%%%%%%%%%%%%%%%%%%%%%%%%%%

%%%%%%%%%%%%%%%%%%%%%%%%%%%%%%%%%%%%%%%%%%%%%%%%%%%%%%%%%%%%%%%%%%%%%%%%%%%%%%%%%%%%%%
%BIBLIOGRAPHY%%%%%%%%%%%%%%%%%%%%%%%%%%%%%%%%%%%%%%%%%%%%%%%%%%%%%%%%%%%%%%%%%%%%%%%%
%%%%%%%%%%%%%%%%%%%%%%%%%%%%%%%%%%%%%%%%%%%%%%%%%%%%%%%%%%%%%%%%%%%%%%%%%%%%%%%%%%%%%%

\bigskip

\small\noindent Universidad del Pa\'{\i}s Vasco, Facultad de Ciencia y Tecnolog\'{\i}a, Departamento de Matem\'aticas,
Apartado 644, 48080 Bilbao, Spain. \\
\texttt{marisa.fernandez@ehu.es}\\
\texttt{victormanuel.manero@ehu.es}

\bigskip

\small\noindent Dipartimento di Matematica, Universit\`a di
Torino, Via Carlo Alberto 10, Torino, Italy.\\
\texttt{annamaria.fino@unito.it}

\end{document}